\newcommand{\<}{\langle}
\renewcommand{\>}{\rangle}
\renewcommand{\a}{\alpha}
\renewcommand{\d}{\delta}
\newcommand{\e}{\varepsilon}
\renewcommand{\l}{\lambda}
\newcommand{\cA}{{\mathcal A}}
\newcommand{\cB}{{\mathcal B}}
\newcommand{\cH}{{\mathcal H}}
\newcommand{\bR}{{\mathbb R}}
\newcommand{\bN}{{\mathbb N}}
\newcommand{\lo}{{\mathrm{\ell^1}}}
\newcommand{\lt}{{\mathrm{\ell^2}}}
\newcommand{\lin}{{\mathrm{\ell^\infty}}}
\newcommand{\tB}{{\tilde B}}
\newcommand{\tdA}{{\tilde A}}
\newcommand{\norm}[1]{ \|  #1 \|}
\def\complex{{\mathbb C}\/}
\def\naturals{{\mathbb N}\/}
\def\integers{{\mathbb Z}\/}
\def\Lth{{\mathbb L}}
\def\Clth{C_\Lth}
\newtheorem{thm}{Theorem}[section]
\newtheorem{cor}[thm]{Corollary} 
\newtheorem{prop}[thm]{Proposition} 
\newtheorem{lem}[thm]{Lemma} 
\theoremstyle{definition}   
\newtheorem{defn}[thm]{Definition} 
\newtheorem{exam}[thm]{Example} 
\newtheorem{notation}[thm]{Notation} 
\numberwithin{equation}{section}
\title
[Nilpotent groups]
{Nilpotent group C*-algebras as \\
compact quantum metric spaces  \\
}
\author{Michael Christ and Marc A. Rieffel}
\address{Department of Mathematics \\
University of California \\ Berkeley, CA 94720-3840}
\email{mchrist@berkeley.edu  \\
rieffel@math.berkeley.edu  
}
\date{August 4, 2015}
\thanks{The research reported here was
supported in part by National Science Foundation grants DMS-1066368 and  DMS-1363324. 
}
\subjclass[2010]{Primary 46L87; 
Secondary 20F65, 22D15, 
53C23, 58B34. 
}
\keywords{Group C*-algebra, Dirac operator, quantum metric space, 
discrete nilpotent group, polynomial growth}
\begin{document}

\begin{abstract}
Let $\Lth$ be a length function on a group $G$, and let $M_\Lth$ denote the
operator of pointwise multiplication by $\Lth$ on $\lt(G)$. 
Following Connes,
$M_\Lth$ can be used as a ``Dirac'' operator for the reduced
group C*-algebra $C_r^*(G)$.  It defines a
Lipschitz seminorm on $C_r^*(G)$, which defines a metric on the state space of
$C_r^*(G)$. We show that 
for any length function of a strong form of polynomial growth on a discrete group,
the topology from this metric 
coincides with the
weak-$*$ topology (a key property for the 
definition of a ``compact quantum metric 
space''). In particular, this holds for all word-length functions
on finitely generated nilpotent-by-finite groups.
\end{abstract}

\maketitle
\allowdisplaybreaks

\section{Introduction}
The group $C^*$-algebras of discrete groups provide a much-studied class of
``compact non-commutative spaces'' (that is, unital $C^*$-algebras). 
In \cite{Cn1} Connes showed that the ``Dirac'' operator of a spectral
triple over a unital $C^*$-algebra provides in a natural way 
a metric on the state space of the algebra.  The
class of examples most discussed in \cite{Cn1} consists of the  group 
$C^*$-algebras of discrete groups $G$, with the Dirac operator consisting 
of the pointwise multiplication operator on $\ell^2(G)$ 
by a word-length function on the group. 
In \cite{R4, R5}  
the second author pointed out that, motivated by what happens 
for ordinary compact metric spaces, it is natural to desire that 
a spectral triple have the property that the topology from the metric on
the state space coincide with the weak-$*$ topology (for which the 
state space is compact). This property was verified in \cite{R4} 
for certain examples. In \cite{R6} this property was taken as the
key property for the definition of a ``compact quantum metric space''.
This property is crucial for defining 
effective notions of quantum Gromov-Hausdorff distance between
compact quantum metric spaces \cite{R6, R7, R21, R29}.

In \cite{R18} the second author studied this property for Connes'
original class of  examples consisting of discrete groups 
with Dirac operators coming
from a word-length functions, and established that it holds for the group ${\mathbb Z}^n$,
relying on geometric arguments. 
Later, with N.~Ozawa \cite{OzR}, 
he established this property
for hyperbolic groups with word-length functions.
The argument was very different from that in \cite{R18},  
relying on filtered C*-algebras. 

In the present paper we verify the property for the case of
finitely generated nilpotent-by-finite groups 
equipped with length functions of polynomial growth, and 
generalize this to a certain class of length functions on
infinitely generated discrete groups.
Since the approach used in the present
paper is quite different from those used in \cite{R18}  and \cite{OzR},
 this raises the 
question of finding a unified approach which covers both the nilpotent
and hyperbolic settings.
The question of what happens for other classes of groups remains wide open.

 To be more specific, let $G$ be a countable (discrete) 
group, and let
$c_c = C_c(G)$ denote the convolution $*$-algebra of complex-valued 
functions of finite
support on $G$.  Let $\l$ denote the usual $*$-representation of $c_c$ on
$\lt = \lt(G)$ coming from the unitary representation of $G$ by left 
translation on
$\lt$.  
Thus 
\[ \lambda_f(\xi)(x) = f*\xi(x) = \sum_{y\in G} f(xy^{-1})\xi(y)\]
for functions $\xi\in\ell^2(G)$.
The completion of $\l(c_c)$ for the operator norm is by definition 
the reduced
group $C^*$-algebra, $C_r^*(G)$, of $G$.  We identify $c_c$ with 
its image in
$C_r^*(G)$, so that it is a dense $*$-subalgebra. We remark that by sending
an element $a \in C^*_r(G)$ to the element of $\lt$ to which it sends
$\d_e \in \lt$ we obtain an embedding of $C^*_r(G)$ into $\lt$. Thus 
when convenient
we can view all of the elements of $C^*_r(G)$ as functions on $G$.
We denote by $e$ the identity element of $G$.

The F\o lner condition for amenability \cite{Mnn, Ptn} 
is a simple consequence of polynomial growth (in the weakest of the three
versions defined below).
Consequently the full and reduced group C*-algebras coincide \cite{Ptn}
under our hypotheses, and so
we do not need to distinguish between them.

Let a length function $\Lth$ be given on $G$.  That is, $\Lth$
is a function from $G$ to $[0,\infty)$ that satisfies
\begin{enumerate}
\item $\Lth(xy) \leq \Lth(x) + \Lth(y)$ for all $x, y \in G$;
\item $\Lth(x^{-1}) = \Lth(x)$ for all $x \in G$;
\item $\Lth(x) = 0$ if and only if $x = e$. 
\end{enumerate}
We say that $\Lth$ is proper if 
$B(r)=\{x\in G: \Lth(x)\le r\}$ is a finite subset of $G$
for each $r<\infty$.

Throughout the paper, we denote by $|E|$ the cardinality of a finite set $E$.

In the literature there are actually two (or more) inequivalent 
definitions of ``polynomial growth". Since we want to distinguish between
them, we will call one of them ``strong polynomial growth''. The proof of
our main theorem works most naturally for an intermediate property,
which we call ``bounded doubling''. 

\begin{defn}
\label{polygr}
Let $\Lth$ be a length function on a group $G$. We say that $\Lth$ has (or is of)
\begin{enumerate}
\item
\emph{strong polynomial growth} if $\Lth$ is proper and
there exist constants $\Clth<\infty$ and $d < \infty$
such that 
\begin{equation}
\label{polyg} 
\Clth^{-1}r^d \leq |B(r)| \leq \Clth r^d   \ \text{ for all $r\ge 1$.}  \end{equation}
\item
\emph{bounded doubling} if $\Lth$ is proper and
there exists a constant $\Clth<\infty$
such that 
\begin{equation}
\label{polygrowthdefn} 
|B(2r)| \leq \Clth |B(r)|   \ \text{ for all $r\ge 1$.}  \end{equation}
\item
\emph{polynomial growth} if $\Lth$ is proper and
there exist constants $\Clth<\infty$ and $d < \infty$
such that 
\begin{equation}
\label{wpolyg} 
|B(r)| \leq \Clth r^d   \ \text{ for all $r\ge 1$.}  \end{equation}
\end{enumerate}
\end{defn}

Equivalent definitions are obtained by changing
the restriction $r\ge 1$ to $r\ge r_0$ for any $r_0>0$,
but the constants $\Clth$ may depend on $r_0$.

\begin{prop}
\label{bnded}
Let $\Lth$ be a length function on a group $G$. If $\Lth$
has strong polynomial growth, then it has bounded doubling. 
If $\Lth$ has bounded doubling then it has polynomial
growth. If $G$ is finitely generated, then these three
properties are equivalent. But in general, no two of these properties are equivalent.
\end{prop}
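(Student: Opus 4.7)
My plan is to handle the proposition in three stages. First I will verify the two general implications by direct computation. For strong polynomial growth $\Rightarrow$ bounded doubling, the bounds $\Clth^{-1} r^d \le |B(r)|$ and $|B(2r)| \le \Clth (2r)^d = 2^d \Clth r^d$ combine immediately to give $|B(2r)| \le 2^d \Clth^2 |B(r)|$ for every $r \ge 1$. For bounded doubling $\Rightarrow$ polynomial growth, I iterate the doubling inequality starting at $r = 1$ to obtain $|B(2^n)| \le \Clth^n |B(1)|$ for all $n \ge 0$; for an arbitrary $r \ge 1$, choosing $n = \lceil \log_2 r \rceil$ then gives $|B(r)| \le |B(2^n)| \le \Clth \cdot |B(1)| \cdot r^{\log_2 \Clth}$, so polynomial growth holds with exponent $d = \log_2 \Clth$.

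For the finitely generated case, the nontrivial task is to show that polynomial growth of $\Lth$ forces strong polynomial growth. I would fix a finite symmetric generating set $S$ and set $K = \max_{s \in S} \Lth(s) < \infty$; subadditivity of $\Lth$ then gives $\Lth(x) \le K \ell(x)$ for the word length $\ell$ associated with $S$, so $B_\ell(r) \subseteq B_\Lth(Kr)$. Thus polynomial growth of $\Lth$ transfers to $\ell$, and Gromov's polynomial growth theorem forces $G$ to be virtually nilpotent; the Bass-Guivarch formula then yields $|B_\ell(r)| \sim r^D$ for some non-negative integer $D$. The same containment, read in reverse, gives $|B_\Lth(r)| \ge |B_\ell(r/K)| \ge c(r/K)^D$ for $r \ge K$, supplying a polynomial lower bound. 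Combining this lower bound with the hypothesized upper bound $|B_\Lth(r)| \le C r^d$, and using the additional rigidity of virtually nilpotent groups to match the two polynomial exponents into a single one, completes the proof of (c).

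For the final assertion that in general no two of the three properties are equivalent, I would construct explicit counterexamples on infinitely generated groups such as $\bigoplus_{n=1}^\infty \bZ$ or $\bigoplus_{n=1}^\infty \bZ/2\bZ$, designing length functions with prescribed pathologies: for polynomial growth without bounded doubling, one engineers sharp jumps in $|B(r)|$ at a sparse sequence of radii while keeping the overall envelope polynomially bounded; for bounded doubling without strong polynomial growth, one installs slowly-varying (e.g.\ logarithmic) corrections to an otherwise polynomial growth rate. The principal obstacle lies in the exponent-matching step of part (c): the Bass-Guivarch exponent $D$ for the word length $\ell$ need not coincide with the polynomial growth exponent $d$ hypothesized for $\Lth$, so passing from the sandwich $c r^D \le |B_\Lth(r)| \le C r^d$ to a single-exponent estimate requires more than elementary ball-containment arguments, and is where the nilpotent structure of $G$ must be invoked in an essential way.
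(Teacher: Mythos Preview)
Your approach is essentially the same as the paper's. For the two general implications you argue exactly as the paper does: the first is immediate from the definitions, and the second is obtained by iterating the doubling inequality and taking logarithms. For the finitely generated case the paper follows precisely your outline: it compares $\Lth$ with a word-length function $\tilde\Lth$ via $\tilde B(r)\subset B(Cr)$, invokes Gromov's theorem to conclude that $G$ is nilpotent-by-finite, cites Bass/Wolf to get strong polynomial growth of $\tilde\Lth$, and then reads off a polynomial \emph{lower} bound for $|B_\Lth(r)|$.

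You are right to flag the exponent-matching step as the delicate point; in fact the paper's own proof simply asserts ``This implies that $\Lth$ has strong polynomial growth'' immediately after obtaining $\tilde C^{-1}r^{\tilde d}\le |\tilde B(r)|\le |B(Cr)|$, without explaining why the lower-bound exponent $\tilde d$ coming from the Bass--Guivarc'h degree should agree with the exponent in the hypothesized upper bound for $\Lth$. So the ``principal obstacle'' you identify is not resolved in the paper either; it is elided. Your write-up is more candid about this than the paper's, but neither supplies the missing argument.

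For the inequivalence assertions, the paper goes beyond a sketch: it builds explicit examples on direct sums $G=\bigoplus_n G_n$ of finite groups with length functions of the form $\Lth(x)=\max\{a_n: x_n\ne e_n\}$. Choosing $G_n=\integers/2\integers$ and $a_k=2^{k^2}$ yields bounded doubling without strong polynomial growth (sub-polynomial growth, in fact), while taking $|G_n|\to\infty$ and $a_n=\prod_{m\le n}|G_m|$ yields polynomial growth without bounded doubling. Your proposed mechanisms (sparse jumps, logarithmic corrections) are in the right spirit, but you should be prepared to write down concrete examples along these lines.
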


See Section~\ref{examp} for a proof, and for examples illustrating these distinctions.

We let $M_h$ denote the (often
unbounded) operator on $\lt$ of pointwise multiplication by 
a function $h:G\to\complex$.  The multiplication operator
$M_\Lth$ will serve as our ``Dirac'' operator, and we will
denote it by $D$.  One sees easily 
\cite{Cn7, R18, OzR} 
that the commutators $[D,\l_f]$ are bounded operators for each $f \in c_c$.  We
can thus define a seminorm, $L_D$, on $c_c$ by $L_D(f) =
\|[D,\l_f]\|$,
where $\|T\|$ denotes the operator norm of a bounded linear operator
$T:\ell^2(G)\to\ell^2(G)$. (Connes points out in proposition 6 of
\cite{Cn7} that $\Lth$ has polynomial growth exactly if there is a 
positive constant, $p$,
such that the operator $D = M_\Lth$ is such that 
$(1 + D^2)^{-p}$ is a trace-class operator.)

Let $L$ be a ${*}$-seminorm (i.e. $L(a^*) = L(a)$)
on a dense $*$-subalgebra $A$ of a unital
$C^*$-algebra ${\bar A}$, satisfying $L(1) = 0$. Define a metric, $\rho_L$, on
the state space $S({\bar A})$ of ${\bar A}$, much as Connes did, by
\[ \rho_L(\mu,\nu) = \sup\{|\mu(a) - \nu(a)|: a \in A,\ L(a) \le 1\}.  \]
(Without further hypotheses, $\rho_L$ may take the value $+\infty$.)  

\begin{defn} \cite{R5}
A ${*}$-seminorm $L$ on $A$ is a 
{\em Lip-norm} if the topology on $S({\bar A})$ defined by
the associated metric $\rho_L$ coincides with the weak-$*$ topology.  
\end{defn}

We consider a unital $C^*$-algebra equipped with a
Lip-norm $L$ to be a compact quantum metric space \cite{R6}, but for
many purposes one wants $L$ to satisfy further properties.
See the discussion after Proposition \ref{prosaic}.
The main question that we deal with in this paper is whether the seminorms $L_D$
defined as above in terms of length functions $\Lth$ on discrete groups are Lip-norms.
Our main theorem is: 

\begin{thm}
\label{mainth}
Let $G$ be a discrete group, and let $\Lth:G\to[0,\infty)$
be a length function of bounded doubling on $G$.  
Let $D=M_\Lth$ be the associated multiplication operator.
Then the seminorm $L_D$ defined on
$c_c$ by $L_D(f) = \|[D,\l_f]\|$ is a Lip-norm on $C^*(G)$.
\end{thm}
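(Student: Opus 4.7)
The plan is to verify the standard total-boundedness criterion (cf.\ \cite{R5, R6}): $L_D$ is a Lip-norm on $C^*(G)$ provided (i) $\{f \in c_c : L_D(f) = 0\} = \mathbb{C}\cdot 1$, and (ii) the set $\mathcal{E} := \{f \in c_c : L_D(f) \le 1,\ \|\lambda_f\| \le 1\}$ is totally bounded in $C^*(G)$. A direct calculation gives the matrix entries
\[
\bigl([D,\lambda_f]\delta_y\bigr)(x) \,=\, \bigl(\Lth(x) - \Lth(y)\bigr)\,f(xy^{-1}),
\]
and specializing to $y=e$ yields the fundamental a priori bound
\[
\sum_{x\in G} \Lth(x)^2\,|f(x)|^2 \,\le\, L_D(f)^2.
\]
Since $\Lth(x) > 0$ for $x \ne e$, condition (i) is immediate: $L_D(f) = 0$ forces $f = f(e)\delta_e$.

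For (ii) I will use a truncation strategy. For $R \ge 1$ and $f \in c_c$ put $f_R = f\cdot \mathbf{1}_{B(R)}$. The set $\{\lambda_{f_R} : f \in \mathcal{E}\}$ lies in a bounded subset of the finite-dimensional subspace $\mathrm{span}\{\lambda_{\delta_x} : x \in B(R)\}$ (since $\|f\|_\infty \le \|\lambda_f\| \le 1$), and is therefore totally bounded. So it suffices to show that
\[
\sup_{f \in \mathcal{E}} \|\lambda_{f - f_R}\|_{C^*(G)} \longrightarrow 0 \quad\text{as } R \to \infty.
\]
The strategy is to dyadically decompose $G \setminus B(R) = \bigsqcup_{k \ge 0} A_k$ with $A_k = B(2^{k+1}R)\setminus B(2^k R)$. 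The a priori estimate yields $\|f\cdot\mathbf{1}_{A_k}\|_2 \le (2^k R)^{-1}$, and iterating bounded doubling gives $|A_k| \le \Clth^{k+1}|B(R)|$.

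The main obstacle is estimating $\|\lambda_{f - f_R}\|$ from these ingredients. The crude bound $\|\lambda_g\| \le \|g\|_1 \le |\mathrm{supp}(g)|^{1/2}\|g\|_2$, applied annulus-by-annulus and summed, diverges whenever $\Clth \ge 4$ (which includes $\mathbb{Z}^n$ for $n\ge 2$, and so covers most cases of interest). The critical step of the proof must therefore exploit bounded doubling more finely, either through a refined estimate for $\|\lambda_g\|$ that takes into account the dyadic structure of $\mathrm{supp}(g)$, or by replacing the sharp truncation $f_R$ with a smoothed truncation (for instance, convolving $f$ with a bump function supported in a ball of radius $\sim R$), so that the resulting convolution or commutator estimates produce a summable dyadic series. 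Once $\sup_{f \in \mathcal{E}}\|\lambda_{f - f_R}\|$ is shown to tend to $0$, the total boundedness of $\mathcal{E}$ follows, and the theorem is proved.
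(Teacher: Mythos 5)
Your setup is sound and your diagnosis of where the difficulty lies is accurate: the a priori bound $\sum_x \Lth(x)^2|f(x)|^2 \le L_D(f)^2$, the dyadic annular decomposition, the count $|A_k|\le \Clth^{k+1}|B(R)|$, and the observation that the crude bound $\|\lambda_g\|\le\|g\|_1$ produces a divergent series are all correct, and the paper does indeed resolve the divergence by replacing sharp truncations with smoothed ones built from convolutions. But the proposal stops exactly where the proof has to begin: you state that the critical step ``must exploit bounded doubling more finely, either through a refined estimate \dots or by a smoothed truncation,'' without supplying either. The missing mechanism, which is the content of Sections 2--3 of the paper, is twofold. First, a localized weighted inequality: for $s>r\ge 0$ one has $\|(I-M_s)\lambda_f M_r\|\le (s-r)^{-1}L_D(f)$, where $M_r$ denotes multiplication by $\mathbf{1}_{B(r)}$; this is proved by expanding $(\Lth(x)-\Lth(y))^{-1}$ in a geometric series and uses the full operator norm of $[D,\lambda_f]$, not just its column at $e$ --- your a priori estimate is precisely that one column and is strictly weaker. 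Second, for a cutoff of the form $g=h^**k$ there is the exact identity $\lambda_{gf}=\sum_z\rho_z^*M_{\tilde h}^*\lambda_f M_{\tilde k}\rho_z$, which converts separation of the supports of $h$ (far from $e$) and $k$ (near $e$) into the off-diagonal localization $(I-M_{2r})\lambda_f M_r$ that the weighted inequality controls; this is what yields the summable bound $\|\lambda_{g_nf}\|\le C R^{-n}L_D(f)$, with $C$ controlled, via bounded doubling, by ratios of the form $|B(R^{n+1})|/|B(R^{n-1})|$. Without these two ingredients there is no route from your a priori estimate to the decay of $\sup_f\|\lambda_{f-f_\sharp}\|$, so what you have is a correct plan rather than a proof.

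Two further gaps are worth flagging. First, even after the smoothed truncation removes the tails, the remainder is supported on the union of infinitely many transition annuli (the gaps between the supports of the smoothed cutoffs), and a second round of annulus-by-annulus estimates is needed before one lands in a finite-dimensional space; the paper must introduce the auxiliary seminorm $J_D(f)=\sup_{r>0} r\|(I-M_{2r})\lambda_f M_r\|$ precisely because $L_D$ of the truncated pieces cannot be usefully controlled, whereas $J_D$ propagates through the cutoffs. Second, your criterion (kernel of $L_D$ equal to $\mathbb{C}\cdot 1$ plus total boundedness of $\{L_D(f)\le 1,\ \|\lambda_f\|\le 1\}$) is not by itself equivalent to being a Lip-norm: one also needs the metric $\rho_{L_D}$ to have finite diameter, equivalently that $\{f: f(e)=0,\ L_D(f)\le 1\}$ be norm-bounded. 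This is not obvious here and in the paper it emerges only at the very end, as a byproduct of the argument; the paper avoids the issue by using the Ozawa--Rieffel criterion, which asks that $\{f\in c_c: f(e)=0,\ L_D(f)\le 1\}$ itself be carried to a totally bounded set.
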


Necessary and sufficient conditions for a seminorm on a
pre-$C^*$-algebra
to be a Lip-norm are given in \cite{R4, R5} (in a more general context).  For our
present purposes it is convenient to reformulate these conditions slightly.
The following reformulation is an immediate corollary of proposition~1.3
of \cite{OzR}.

\begin{prop}
\label{proplip}
Let $G$ be a discrete group, and let $\Lth:G\to[0,\infty)$ be a length function. 
The associated seminorm $L_D$ is a Lip-norm on $c_c=C_c(G)$
if and only if $\l$ carries
\[ \{f \in c_c:   f(e) = 0    \mbox{ and }  L_D(f) \le 1 \} \]
to a subset of $\cB(\lt)$ that is totally bounded for the operator norm.
\end{prop}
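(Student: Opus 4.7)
The plan is to reduce the claim to the general Lip-norm criterion of \cite{R4, R5}: a $*$-seminorm $L$ with $L(1)=0$ on a dense $*$-subalgebra $A$ of a unital C*-algebra $\bar A$ is a Lip-norm exactly when the image of $\{a \in A : L(a) \le 1\}$ in the quotient Banach space $\bar A/\bC\cdot 1$ is totally bounded. The task is then to re-express this quotient-space condition as a statement about a concrete subset of $\cB(\lt)$.

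To do this, I would exploit the canonical vector state $\t$ on $C_r^*(G)$ given by $\t(a) = \<a\d_e, \d_e\>$; on $c_c$ this reads $\t(\l_f) = f(e)$, and it is continuous since $|f(e)| \le \|\l_f\|$. Because $\t(1) = 1$, the map $P(a) = a - \t(a)\cdot 1$ is a bounded linear projection of $\bar A$ onto $\ker\t$, yielding a Banach-space direct sum decomposition $\bar A = \bC\cdot 1 \oplus \ker\t$. Restricting the quotient map therefore gives a Banach-space isomorphism $\ker\t \cong \bar A/\bC\cdot 1$. Consequently, a set $E \subset \bar A$ has totally bounded image in $\bar A/\bC\cdot 1$ if and only if $P(E)$ is totally bounded in $\bar A$.

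Apply this with $E = \l\bigl(\{f \in c_c : L_D(f) \le 1\}\bigr)$. Since $\l_{\d_e} = I$, one has $[D,\l_{\d_e}] = 0$, so $L_D$ vanishes on scalar multiples of $\d_e$, and hence $L_D(f - f(e)\d_e) = L_D(f)$ for every $f \in c_c$. Thus $P$ carries $E$ onto
\[
P(E) = \l\bigl(\{f \in c_c : f(e) = 0 \text{ and } L_D(f) \le 1\}\bigr) \subset \ker\t.
\]
Combining this identification with the general criterion of the previous paragraph yields the stated equivalence. The only substantive point is the continuity of the splitting $P$, which guarantees that total boundedness in the quotient matches total boundedness in $\ker\t$ (equivalently, in $\cB(\lt)$); this packaging is precisely the content of Proposition~1.3 of \cite{OzR}, which the statement cites.
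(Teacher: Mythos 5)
Your argument is correct and is in substance the same route the paper takes: the paper simply cites Proposition~1.3 of \cite{OzR}, and what you have written is precisely a proof of the special case of that proposition needed here, namely the reduction to the total-boundedness criterion of \cite{R4, R5} in $\bar A/\bC\cdot 1$ followed by the identification of that quotient with $\ker\t$ via the canonical trace $\t(\l_f)=f(e)$ and the observation that $L_D(f-f(e)\d_e)=L_D(f)$. The only detail worth making explicit is that the null space of $L_D$ on $c_c$ is exactly $\bC\d_e$ (take $y=e$ in the kernel identity $(\Lth(x)-\Lth(y))f(xy^{-1})=0$), which is the nondegeneracy hypothesis under which the criterion of \cite{R5} is stated.
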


Accordingly,  the content of this paper consists in verifying
the criterion of this proposition for the case of a
group $G$ equipped with a length function $\Lth$ that has bounded doubling.

Shorn of its functional analytic context and motivation,
the result proved in this paper is as follows.
The proof developed below is loosely related to some elements of
\cite{Crs1} and \cite{Crs2}.

\begin{prop} \label{prosaic}
Let $G$ be a discrete group. Let $\Lth:G\to[0,\infty)$ be a 
length function on $G$ that has bounded doubling,
and let $D_\Lth$ be the associated Dirac operator on $c_c(G)$.
For every $\varepsilon>0$ there exists a finite set $S_\varepsilon\subset G$
such that for any finitely supported $f:G\to\complex$
satisfying $\norm{[D_\Lth,\lambda_f]}\le 1$
there exists a decomposition $f = f_\sharp+f_\flat$
such that $f_\flat$ is supported on $S_\varepsilon$ and 
$\norm{\lambda_{f_\sharp}}\le\varepsilon$.
\end{prop}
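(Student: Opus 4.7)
The plan is to take $S_\varepsilon = B(R)$ for a radius $R = R(\varepsilon,\Clth)$ to be chosen, and set $f_\flat = f\cdot\chi_{B(R)}$ and $f_\sharp = f - f_\flat$. The task then reduces to showing, uniformly over $f \in c_c$ satisfying $\norm{[D_\Lth,\lambda_f]}\le 1$, that $\norm{\lambda_{f_\sharp}}\le\varepsilon$ once $R$ is sufficiently large. An immediate first input is pointwise decay of $f$: applying the commutator to $\delta_e$ gives $[D_\Lth,\lambda_f]\delta_e(x) = \Lth(x)f(x)$, so $\norm{\Lth\cdot f}_{\lt} \le \norm{[D_\Lth,\lambda_f]} \le 1$. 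Decomposing $f_\sharp = \sum_{k\ge k_0}f_k$ dyadically with $f_k = f\cdot\chi_{A_k}$ on the annulus $A_k = B(2^{k+1})\setminus B(2^k)$, this yields the scale-wise estimate $\norm{f_k}_{\lt}\le 2^{-k}$.

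The central task is to produce a summable scale-wise operator-norm bound $\norm{\lambda_{f_k}}\le c_k$ with $\sum_{k\ge k_0}c_k < \varepsilon$ for $k_0 = k_0(\varepsilon)$ large. The naive chain $\norm{\lambda_{f_k}} \le \norm{f_k}_{\lo} \le |A_k|^{1/2}\norm{f_k}_{\lt} \le C\cdot 2^{k(d/2-1)}$, using the bounded-doubling consequence $|A_k|\le C\,2^{kd}$ with $d = \log_2\Clth$, fails to be summable whenever $d \ge 2$, which is the generic situation (for instance, the Heisenberg group has homogeneous dimension $4$). To overcome this, one must use more of $\norm{[D_\Lth,\lambda_f]}\le 1$ than its action on $\delta_e$ alone: testing the commutator against vectors of the form $|E|^{-1/2}\chi_E$ for finite sets $E$ adapted to the scale $2^k$ yields weighted $\lt$-estimates on translates of $f$ that express quasi-cancellation. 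Equivalently, one aims to bound the autocorrelation $(f_k*f_k^*)(g) = \langle f_k,\lambda_g f_k\rangle$ nontrivially for $g$ of length comparable to $2^k$, exploiting that the global Lipschitz-type constraint forces $\lambda_g f$ to stay close to $f$, so that $f_k*f_k^*$ cannot be concentrated near the identity in the way the crude bound tacitly assumes.

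The hardest step will be this scale-wise refinement: converting a single global operator inequality on $[D_\Lth,\lambda_f]$ into quantitative cancellation on every dyadic scale. The paper's reference to \cite{Crs1,Crs2} suggests an almost-orthogonality or Calder\'on--Zygmund style argument carried out via a system of dyadic ``cubes'' in $G$, objects that exist in any space of bounded doubling. The doubling hypothesis enters twice: geometrically, to cover each $A_k$ by a controlled number of translates of $B(2^k)$ from which one assembles the test vectors; and analytically, to ensure that iterated convolutions such as $f_k*f_k^*$ remain localized in $B(C\cdot 2^k)$, keeping the combinatorial bookkeeping finite across scales. Once the summable bound $c_k$ is in hand, choosing $R = 2^{k_0}$ with $k_0$ sufficiently large (in terms of $\varepsilon$ and $\Clth$) yields $\norm{\lambda_{f_\sharp}} \le \sum_{k\ge k_0}c_k < \varepsilon$, completing the argument.
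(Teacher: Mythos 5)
Your setup---cut $f$ at a large radius, split the tail into dyadic annuli, and seek summable scale-wise bounds $\norm{\lambda_{f_k}}\le c_k$---matches the overall shape of the paper's argument, and your diagnosis is correct: the naive chain through $\ell^1$ and Cauchy--Schwarz fails, and the full operator-norm hypothesis must be exploited at every scale, not just its action on $\delta_e$. But the proposal stops exactly where the proof has to begin. The step you yourself call the hardest---producing the summable $c_k$---is left as an aspiration, supported only by a gesture toward almost-orthogonality and autocorrelation bounds on $f_k*f_k^*$ that is not developed far enough to be checked, and that does not reflect where the decay actually comes from. In the paper the decay in $k$ is not extracted from cancellation in $f_k*f_k^*$; it comes from two concrete mechanisms. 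First, an off-diagonal estimate (Proposition~\ref{weight}): expanding $(\Lth(x)-\Lth(y))^{-1}$ in a geometric series converts the kernel identity $f(xy^{-1})=(\Lth(x)-\Lth(y))^{-1}[D,\lambda_f](x,y)$ into
\[
(I-M_s)\,\lambda_f\,M_r=\sum_{j=0}^\infty D^{-1-j}(I-M_s)[D,\lambda_f]D^jM_r,
\qquad
\norm{(I-M_s)\lambda_f M_r}\le (s-r)^{-1}\norm{[D,\lambda_f]} .
\]
This is the precise sense in which one global commutator bound yields quantitative information at every scale, and the factor $(s-r)^{-1}\sim 2^{-k}$ is the sole source of the geometric decay. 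Second, a cutoff identity (Proposition~\ref{cutoff}): if the cutoff is a convolution $h^**k$ with $k$ a normalized indicator of a small ball and $h$ the indicator of a distant annulus, then $\lambda_{(h^**k)f}=\sum_z\rho_z^*M_{\tilde h}^*\lambda_f M_{\tilde k}\rho_z$, an average over the right regular representation each of whose summands is exactly of the localized form controlled by the off-diagonal estimate; bounded doubling enters only to keep the normalization $(|B(r)|^{-1}|B(t)|)^{1/2}$ uniformly bounded when $r$ and $t$ are comparable.

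A second, related gap is that your decomposition uses the sharp cutoffs $f\chi_{A_k}$, which the mechanism above cannot handle directly: the convolution identity requires a smoothed cutoff, not a sharp indicator. The paper therefore runs two passes. It first peels off $p_N=\sum_{n\ge N}g_{2n}f$ using smoothed, disjointly supported cutoffs $g_{2n}$, and only then applies sharp annular cutoffs to the residue $q_N=f-p_N$, which by construction already vanishes on collar neighborhoods of the cutting spheres, so the sharp cutoff is harmless there (Proposition~\ref{keyprop}); an auxiliary seminorm $J_D$ is introduced precisely to retain control through the first pass, since $L_D$ of the pieces is lost. None of this bookkeeping is optional, and none of it appears in your sketch. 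So while your reduction and your identification of the obstruction are sound, the proposal as written does not contain a proof of the central estimate.
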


More generally, for an arbitrary function $f:G\to\complex$,
$[D_\Lth,\lambda_f]$ is well-defined as a linear operator from $c_c$ to the space of
all functions from $G$ to $\complex$.
The analysis below demonstrates that if $f:G\to\complex$
is any function for which $[D_\Lth,\lambda_f]$
maps $c_c$ to $\ell^2$ and extends to a bounded linear operator
from $\ell^2$ to $\ell^2$ with $\norm{[D_\Lth,\lambda_f]}\le 1$, then $f$ satisfies the 
conclusion of Proposition~\ref{prosaic}. 
In particular, 
$f$ (that is, $\lambda_f$) is necessarily an element of $C^*_r(G)$.

We believe that our whole discussion could be extended to
the slightly more general setting of group $C^*$-algebras 
twisted by a $2$-cocycle \cite{pck1, pck2}, 
much as done in \cite{R18}, but we have not checked this carefully.

The definition of a
``compact C*-metric'' as given in definition 4.1 of \cite{R21}
brings together
most of the additional conditions that have been found to be 
useful to require of
a Lip-norm $L$ on a C*-normed algebra $\cA$.
Namely, one wants $L$ to be lower semi-continuous with respect
to the operator norm, to be strongly Leibniz as defined there, and
one wants the $*$-subalgebra of elements of $\cA$ 
on which $L$ is finite to be a 
dense spectrally-stable subalgebra of the norm-completion $\bar \cA$
of $\cA$. For any group $G$ with proper length function $\Lth$
and corresponding seminorm $L_D$ for $D = M_\Lth$ 
one can always obtain these properties in the following way (as 
explained in \cite{R21}, especially its example 4.4).
The one-parameter unitary group generated by $D$ consists
of the operators of pointwise multiplication by the functions
$e^{it\Lth}$. Conjugation by these operators defines a 
one-parameter group, $\a$, of automorphisms of
$\cB(\lt)$ (which need not be strongly continuous, and need
not carry $\cA = C^*_r(G)$ into itself). By using $\a$ one shows that $L_D$
on $c_c$ is lower semi-continuous with respect to the operator
norm, and so has a natural extension, $\bar L_D$ to a lower
semi-continuous seminorm on all of $\bar \cA = C^*_r(G)$ 
(which may take the
value $+ \infty$). Let $\cA^\infty$ denote the $*$-subalgebra of 
elements of $\bar\cA$ that are infinitely differentiable for $\a$
It contains $c_c$ and so is dense in $\bar \cA$, and it is
spectrally stable in $\bar \cA$. The restriction of $\bar L_D$
to $\cA^\infty$ satisfies all the conditions for being a $C^*$-metric,
for reasons given in section 3 of \cite{R21}, 
except for the fact that it may not be a Lip-norm.
Thus this paper verifies, for groups with length
functions of bounded doubling, 
the most difficult condition, namely of obtaining
a Lip-norm, so that for such groups
$(\cA^\infty, \bar L_D)$ is a compact C*-metric space. 
One can continue to show that all continues to work well
for matrix algebras over $\cA$ along the lines given in \cite{R29},
so that one should give the definition of a ``matricial $C^*$-metric'',
but we will not pursue that important aspect here.

Since both nilpotent-by-finite groups and hyperbolic groups are
groups of ``rapid decrease'' \cite{Jo2, dHp}, 
it is natural to ask whether our main
theorem extends to all groups of rapid decrease. For the
reader's convenience we recall here the definition of this concept:
For any group $G$ and length function $\Lth$ 
on it, and for any $s \in \bR$, the Sobolev space $\cH^s_\Lth(G)$
is defined to be the set of functions $\xi$ on $G$ such that
$(1+\Lth)^s\xi\in\lt$. The space $\cH^\infty_\Lth$ of rapidly decreasing functions
is defined to be $\bigcap_{s \in \bR}\cH^s_\Lth$.
The group $G$ is said to be of rapid decrease if 
it has a length function $\Lth$ such that $\cH^\infty_\Lth$ is
contained in $C^*_r(G)$, that is, if all the convolutions of elements
of $c_c$ by elements of $\cH^\infty_\Lth$ extend to  bounded operators on $\lt$.
For closely related Lip-norms (which are not Leibniz)
obtained by using ``higher derivatives'' for
groups of rapid decrease, see \cite{AnC}.


\section{Localized weighted inequality}
\label{ineq}

In this section we develop a key inequality that holds for any discrete group $G$
equipped with a proper length function $\Lth$. For any $h \in \lin$ we let $M_h$ 
denote the operator on $\lt$ of pointwise
multiplication by $h$. If $E$ is a subset of $G$, we let $M_E$ 
denote $M_h$ for $h$ the characteristic
(or indicator) function $\chi_E$ of $E$, so $M_E=M_{\chi_E}$ is a projection operator. 
For any $r\geq 0$ we set $B(r) = \{x\in G: \Lth(x) \leq r\}$,
which is a finite set since $\Lth$ is proper.
We set $M_r = M_{B(r)}$.  Each $M_r$ is a spectral projection of $D$.

It is convenient to use the kernel functions for the operators $\l_f$ and $[D, \l_f]$,
for any $f \in c_c$. The kernel function for $\l_f$ is  $f(xy^{-1})$, that
is, $(\l_f\xi)(x) = \sum_y f(xy^{-1})\xi(y)$ for any $\xi \in \lt$. 
The kernel function $[D,\l_f](x,y)$ for the operator $[D, \l_f]$ 
is $[D,  \l_f](x,y) = (\Lth(x)-\Lth(y))f(xy^{-1})$,
with slight abuse of notation. Thus if $\Lth(x) \neq \Lth(y)$ then 
\[ 
f(xy^{-1}) = (\Lth(x) - \Lth(y))^{-1}[D, \l_f](x,y). 
 \]
If $\Lth(x) > \Lth(y)$ then
\[
(\Lth(x) - \Lth(y))^{-1} = \Lth(x)^{-1}(1 - \Lth(y)/\Lth(x)) 
= \Lth(x)^{-1} \sum_{k=0}^\infty \Lth(y)^k \Lth(x)^{-k}   .
\]
Thus, if we are given $r, s \in [0,\infty)$ with $0 \le r < s$, and if $\xi \in \lt$ is supported
in $B(r)$, then for any $x \in G$ satisfying $\Lth(x) \ge s$ we have
\begin{align*}
(\l_f\xi)(x) &= \sum_y f(xy^{-1})\xi(y) = \sum_y (\Lth(x) - \Lth(y))^{-1}[D, \l_f](x,y)\xi(y)  \\
&=\sum_{y \in B(s)} \Lth(x)^{-1} \sum_k \Lth(x)^{-k} \Lth(y)^k [D, \l_f](x,y) \xi(y)   \\
&=\sum_k \Lth(x)^{-1} \sum_{y \in B(s)} \Lth(x)^{-k} [D, \l_f](x,y)  \Lth(y)^k\xi(y)  \\
&=\big(\sum_k D^{-1-k}(I-M_s)[D, \l_f]  D^kM_r\xi\big) \ (x).
\end{align*}
That is,
\[
(I-M_s) \l_f M_r = \sum_{k=0}^\infty D^{-1-k}(I-M_s)[D, \l_f]  D^kM_r  .
\]
But $\|D^{-1-k}(I-M_s)\| \le s^{-1-k}$ while $\|[D, \l_f]  D^kM_r\|  \leq r^kL_D(f)$.
Consequently
\[
\|(I-M_s) \l_f M_r\| \  \le \ \ s^{-1} \sum_k (r/s)^kL_D(f) = (s-r)^{-1}L_D(f)  .
\]
We have thus obtained:
\begin{prop}
\label{weight}
For any $f \in c_c$ and any $r, s \in \bR$ with $s > r \geq 0$ we have
\[
\|(I-M_s) \l_f M_r\| \  \leq \  (s-r)^{-1}L_D(f)  . 
\]
\end{prop}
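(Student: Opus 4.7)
The plan is to exploit the fact that $\l_f$ and $[D,\l_f]$ have explicit integral-kernel representations: $\l_f(x,y) = f(xy^{-1})$ and $[D,\l_f](x,y) = (\Lth(x)-\Lth(y))f(xy^{-1})$. The operator $(I-M_s)\l_f M_r$ has its kernel supported on pairs $(x,y)$ with $\Lth(y) \le r < s \le \Lth(x)$, so on this support $\Lth(x) - \Lth(y) \ge s-r > 0$. This allows me to solve for $f(xy^{-1}) = (\Lth(x)-\Lth(y))^{-1}[D,\l_f](x,y)$ on that region, converting the (unknown) norm of $\l_f$ into something controlled by $L_D(f)$.

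The next step is to expand the reciprocal as a geometric series,
\[
(\Lth(x) - \Lth(y))^{-1} = \Lth(x)^{-1}\sum_{k=0}^\infty (\Lth(y)/\Lth(x))^k,
\]
which converges absolutely since $\Lth(y)/\Lth(x) \le r/s < 1$ on the relevant support. Substituting into the kernel and regrouping, I expect to recognize the kernel representation of the operator series
\[
\sum_{k=0}^\infty D^{-1-k}(I-M_s)[D,\l_f] D^k M_r,
\]
so as to obtain the operator identity $(I-M_s)\l_f M_r = \sum_{k\ge 0} D^{-1-k}(I-M_s)[D,\l_f] D^k M_r$. Here $D^{-1-k}$ is to be interpreted on the range of $I-M_s$, on which $D \ge s$, while $D^k M_r$ is controlled because $D \le r$ on the range of $M_r$.

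Finally, I would estimate each summand in operator norm: $\|D^{-1-k}(I-M_s)\| \le s^{-1-k}$ and $\|D^k M_r\| \le r^k$, since $M_s$ and $M_r$ are spectral projections of $D$, while $\|[D,\l_f]\| = L_D(f)$ by definition. Thus the $k$th summand has norm at most $s^{-1-k} r^k L_D(f)$, and summing the geometric series yields $s^{-1}L_D(f)\sum_{k\ge 0}(r/s)^k = (s-r)^{-1}L_D(f)$, which is the desired bound.

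The main obstacle I anticipate is making rigorous the passage from the formal kernel manipulation to a genuine operator identity and ensuring norm convergence of the series. Since $f\in c_c$, for each fixed $\xi\in\lt$ the vector $\l_f\xi$ is a finite sum of translates of $\xi$, so each kernel computation reduces to a finite pointwise sum on any fixed input; the termwise geometric bound then upgrades pointwise equality to norm convergence in $\cB(\lt)$, closing the argument.
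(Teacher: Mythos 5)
Your proposal is correct and follows essentially the same route as the paper: the same kernel identity $f(xy^{-1}) = (\Lth(x)-\Lth(y))^{-1}[D,\l_f](x,y)$ on the support of $(I-M_s)(\cdot)M_r$, the same geometric-series expansion of $(\Lth(x)-\Lth(y))^{-1}$ leading to the operator identity $(I-M_s)\l_f M_r = \sum_{k\ge 0} D^{-1-k}(I-M_s)[D,\l_f]D^k M_r$, and the same termwise norm estimates summing to $(s-r)^{-1}L_D(f)$. Nothing further is needed.
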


Let us compare this proposition with the main result of section 2 of \cite{OzR}.
Suppose that $\Lth$ takes its values in $\bN$, and for each $n \in \bN$ let $\cA_n$
consist of the elements of $c_c$ supported on $B(n)$. 
Let $\cA$ denote the union of the $\cA_n$'s, so that $\cA$ is a unital dense
$*$-subalgebra of $\lo(G)$. Then the family $\{\cA_n\}$ 
is a filtration of $\cA$, and in the topological sense it is a filtration of
$\lo(G)$, and of the C*-algebra completion $C^*_r(G)$ 
of $\lo(G)$ for the operator norm. This is discussed in section 1 of \cite{OzR},
where the following observations are made. For a faithful tracial state on a
filtered C*-algebra  (such as the canonical trace on $C^*_r(G)$) with filtration
$\{\cA_n\}$, one can form
the corresponding GNS Hilbert space, $\cH$, and the representation $\l$
of $\cA$ on it coming from the left regular representation of $\cA$ on itself. 
For each $n \in \bN$ let 
$Q_n$ denote the orthogonal
projection of $\cH$ onto its (finite-dimensional) 
subspace  $\cA_n$. (In the above discussion for groups this
operator would be denoted by $M_n$.) Then set $P_n = Q_n - Q_{n-1}$ 
for $n \geq 1$, and $P_0 = Q_0$. The $P_n$'s are mutually orthogonal,
and their sum is $I_\cH$ for the strong operator topology. One then
defines an unbounded operator $D$ on $\cH$ by $D = \sum_{n = 0}^\infty nP_n$.
For any $a \in \cA$ the densely defined operator $[D, \l_a]$ is a bounded
operator, and so extends to a bounded operator on $\cH$. We can then define
a seminorm, $L_D$, on $\cA$ by
\[
L_D(a) = \|[D, \l_a]\|.
\]
This $L_D$ is essentially a generalization of the $L_D$ that we have used
above for the group case.
Let $T$ be any bounded operator on $\cH$ such that $[D, T]$ has dense
domain containing $\cA$  and is bounded on its domain, so extends
to a bounded operator on $\cH$. For any natural number $N$ set
\[
T^{(N)} = \sum_{|m-n| > N} P_mTP_n.
\]
Then the main result of section 2 of \cite{OzR} 
provides a specific sequence, $\{C_N\}$, of constants, independent of $D$ and $T$, 
that converges to 0 as $N$ goes
to $\infty$, such that
\[
\|T^{(N)}\| \leq C_N\|[D, T]\|
\]
for all $N$. Notice then that for any $p,  q \in \bN$ such that $q -p >N$
we have
\[
(1-Q_q)\big(\sum_{|m-n| > N} P_mTP_n \ \big)Q_p = (1-Q_q)TQ_p,
\]
and consequently
\begin{equation} \label{QpQq}
\|(1-Q_q)TQ_p)\| \leq C_N\|[D,T]\|   .
\end{equation}
This is essentially a generalization of Proposition~\ref{weight}, but with not as good
a constant. 


\section{Cutoff functions}
\label{furt}

For the proof of Theorem \ref{mainth} we seek, for any $\e > 0$ and every
$f \in c_c$, 
a decomposition $f=f_\sharp+f_\flat$ with certain properties.
It is natural to accomplish this by means of multiplication operators,
so that in the notation of Proposition~\ref{prosaic}, 
$f_\flat= M_g f = g f$ where the cutoff function $g$ depends only on $G$, $\Lth$, and $\e$.
It will be more convenient to construct $f_\sharp$ in this way,
and this will be accomplished by means of an infinite series of finitely supported cutoff functions.
Thus one is led to analyze $\lambda_{g_\nu f}$ in terms of $\lambda_f$, 
for a family of cutoff functions $g_\nu$ whose supports are finite for each $\nu$, but not uniformly so.

As motivation, consider the Abelian case, employing additive notation $x-y$ in place of
multiplicative $xy^{-1}$ for the group operation.
The operator $\lambda_{gf}$ has kernel function $g(x-y)f(x-y)$.
As in the proof of Proposition~\ref{weight},
it can be useful to express $g$ as an infinite sum of product functions
$g(x-y) = \sum_k \phi_k(x)\psi_k(y)$ 
with $\sum_k \norm{\phi_k}_{L^\infty} \norm{\psi_k}_{L^\infty}\le C_0$, 
where $C_0$ is a finite constant which is to be bounded uniformly over a suitable family 
of cutoff functions $g$. 
This expresses $\lambda_{gf}$ as $\sum_k M_{\phi_k} \lambda_f  M_{\psi_k}$
 with $\sum_k \norm{M_{\phi_k}\lambda_f M_{\psi_k}} \le C_0\norm{\lambda_f}$.
If the Fourier transform $\widehat{g}$ satisfies $\norm{\widehat{g}}_{L^1}\le C_0$ 
then one obtains at once a continuum
decomposition of this type; 
\[
g(x-y) = \int \widehat{g}(\xi)e^{2\pi i \xi\cdot(x-y)}\,d\xi 
= \int \widehat{g}(\xi) e^{2\pi i \xi\cdot x} e^{-2\pi i \xi\cdot y}\,d\xi   ,
\]    
and one sets $\phi_\xi(x)=\widehat{g}(\xi)e^{2\pi i x\cdot\xi}$
and $\psi_\xi(y) = e^{-2\pi i y\cdot\xi}$
to obtain 
\[
\int \norm{\phi_\xi}_\infty \norm{\psi_\xi}_\infty\,d\xi\le C_0.
\]
One effective way to ensure that $\norm{\widehat{g}}_{L^1}\le C_0$ 
is to express $g$ as a convolution product $g=g_1*g_2$ with
$\norm{g_1}_{\ell^2}\norm{g_2}_{\ell^2}\le C_0$.
For not necessarily Abelian groups with length functions of bounded doubling,
we will show below how convolution products of appropriately chosen $\ell^2$ functions
can be used to construct useful
cutoff functions $g$, despite the lack of a convenient Fourier transform.

\subsection{Convolutions as cutoff functions}

We begin with some generalities concerning $\lambda_{gf}$ when the cutoff function $g$
is expressed as a convolution 
$h^**k$ for $h, k \in c_c$. 
Let $\rho$ denote the right regular representation
of $G$ on $\lt$, defined by $\rho_u(\xi)(x) = \xi(xu^{-1})$. Then
$\rho_u$ commutes with $\l_f$ for any $f \in c_c$. 
For any $h\in c_c$ we define $\tilde h(x) = h(x^{-1})$
and $h^*(x) = \overline{h}(x^{-1})$.

\begin{prop}
\label{cutoff} 
For any $f, h, k \in c_c$ we have
\begin{equation} \label{series*}
\l_{(h^**k)f} = \sum_z \rho_z^* M^*_{\tilde h} \l_f M_{\tilde k} \rho_z,
\end{equation}
where this sum converges for the weak operator topology. 
Furthermore 
\[ \|\l_{(h^**k)f}\| \leq \|\l_f\| \|h\|_2 \|k\|_2.  \]
\end{prop}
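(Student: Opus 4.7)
The plan is to prove both assertions simultaneously by a single kernel-level computation followed by a Cauchy--Schwarz argument that delivers both the weak operator convergence and the operator norm estimate at once. Write $T_z := \rho_z^* M^*_{\tilde h} \lambda_f M_{\tilde k} \rho_z$; since each $\rho_z$ is unitary and $M_{\tilde h}, M_{\tilde k}$ are bounded (as $h,k\in c_c$), each $T_z$ is a bounded operator on $\ell^2(G)$.

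First I would identify the integral kernel of $T_z$. Using $(\rho_z\xi)(x)=\xi(xz^{-1})$, $(\rho_z^*\mu)(x)=\mu(xz)$, and the formulas for $\lambda_f$, $M_{\tilde k}$, $M^*_{\tilde h}$, a direct computation (with substitution $u=yz^{-1}$ to simplify) yields the kernel
\[
K_z(x,u) = \overline{h(z^{-1}x^{-1})}\, f(xu^{-1})\, k(z^{-1}u^{-1}).
\]
Summing formally over $z$ and substituting $w=z^{-1}$, then reindexing via $w'=wx^{-1}$, gives
\[
\sum_z K_z(x,u) = f(xu^{-1})\sum_{w'}\overline{h(w')}\, k(w'xu^{-1}) = f(xu^{-1})(h^**k)(xu^{-1}),
\]
which is precisely the kernel of $\lambda_{(h^**k)f}$. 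This establishes \eqref{series*} at the formal level; it remains to upgrade it to genuine weak operator convergence and to obtain the norm bound.

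For both, I would test against arbitrary $\xi,\eta\in\ell^2(G)$ and estimate
\[
|\langle T_z\xi,\eta\rangle| = |\langle \lambda_f M_{\tilde k}\rho_z\xi,\, M_{\tilde h}\rho_z\eta\rangle| \le \|\lambda_f\|\cdot\|M_{\tilde k}\rho_z\xi\|\cdot\|M_{\tilde h}\rho_z\eta\|,
\]
then apply Cauchy--Schwarz to the sum over $z$. The crucial observation is that
\[
\sum_z \|M_{\tilde k}\rho_z\xi\|^2 = \sum_x |k(x^{-1})|^2 \sum_z|\xi(xz^{-1})|^2 = \|k\|_2^2\,\|\xi\|^2,
\]
since $z\mapsto xz^{-1}$ is a bijection of $G$ for each fixed $x$, and similarly for $\eta$. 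Combining yields $\sum_z|\langle T_z\xi,\eta\rangle|\le\|\lambda_f\|\|h\|_2\|k\|_2\|\xi\|\|\eta\|$. This absolute convergence of every matrix coefficient gives weak operator convergence of the finite partial sums to a bounded operator whose matrix entries agree with those read off from the kernel identity, so that operator is $\lambda_{(h^**k)f}$; the same inequality is the asserted norm bound. The only real obstacle is keeping indices and inverses straight through the kernel computation.
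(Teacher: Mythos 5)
Your argument is correct and follows essentially the same route as the paper's proof: the identity $\sum_z \|M_{\tilde k}\rho_z\xi\|_2^2 = \|k\|_2^2\|\xi\|_2^2$ followed by Cauchy--Schwarz over $z$ is exactly the paper's mechanism for obtaining both the weak operator convergence and the norm bound. The only (cosmetic) difference is that you compute the kernel of each summand $\rho_z^* M^*_{\tilde h}\l_f M_{\tilde k}\rho_z$ and sum, whereas the paper expands $\<\l_{(h^**k)f}\xi,\eta\>$ directly and regroups to recognize $\sum_z\<\l_f M_{\tilde k}\rho_z\xi, M_{\tilde h}\rho_z\eta\>$.
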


\begin{proof}
Notice that
\[
(h^**k)(yx^{-1}) = \sum_z \bar h(z^{-1})k(z^{-1}yx^{-1}) 
= \sum_z \bar h(z^{-1}y^{-1})k(z^{-1}x^{-1}).
\]
Then, on using this, for any $\xi, \eta \in c_c$ we have
\begin{align*}
 \<\l_{(h^**k)f} \xi, \eta\> & = \sum_y(\l_{(h^**k)f} \xi)(y)\bar \eta(y)  \\
&= \sum_y \sum_x (h^**k)(yx^{-1})f(yx^{-1})\xi(x)\bar\eta(y)   \\
&= \sum_y \sum_x \sum_z \bar h(z^{-1}y^{-1})k(z^{-1}x^{-1})f(yx^{-1}))\xi(x)\bar\eta(y)  \\
&= \sum_y \sum_x \sum_z  f(yx^{-1})k(x^{-1})\xi(xz^{-1})\bar h(y^{-1})\bar \eta(yz^{-1})   \\
&= \sum_z \<\l_f M_{\tilde k} \rho_z \xi, \ M_{\tilde h} \rho_z \eta\>  \\
& =  \sum_z \<\rho_z^*M_{\tilde h}^*\l_f M_{\tilde k} \rho_z \xi, \   \eta\>       .
\end{align*}
But
\begin{align*}
\sum_u \| M_{\tilde k}\rho_u \xi\|^2_2 &= \sum_u\sum_x|M_{\tilde k}\rho_u \xi (x)|^2
=\sum_u\sum_x|k(x^{-1})\xi(xu^{-1})|^2  \\
&= \sum_x |k(x^{-1})|^2 \|\xi\|^2_2 = \|k\|^2_2 \|\xi\|^2_2   ,
\end{align*}
and similarly for $M_{\tilde h}\rho_v\eta$, so that by Cauchy-Schwarz,
\[
\sum_z |\<\l_f M_{\tilde k} \rho_z \xi, \ M_{\tilde h} \rho_z \eta\> |  
\leq \|\l_f\| \|h\|_2 \|k\|_2 \|\xi\|_2 \|\eta\|_2.
\]
This implies both convergence of the series \eqref{series*} for the weak operator topology, and
the stated norm inequality.
Notice that because $\rho$ is a unitary representation the norm of
each operator $\rho_z^*M_{\tilde h}^*\l_f M_{\tilde k} \rho_z $ is equal to
$\|M_{\tilde h}^*\l_f M_{\tilde k}\|$.
\end{proof}

Proposition~\ref{cutoff} fits very well into the setting of ``proper actions of
groups on C*-algebras'' that is defined and discussed in \cite{R30}. Let
$\cA$ denote the algebra of compact operators on $\lt$, and let $\a$
denote the action of $G$ on $\cA$ by conjugation by $\rho$. From
example~2.1 of \cite{R30} but with the roles of $\l$ and $\rho$ reversed,
we see that $\a$ is a proper action as defined in \cite{R30}. The 
finite-rank operator $M_{\tilde h}^*\l_f M_{\tilde k}$ above is easily
seen to have kernel function of finite support, putting it in the dense
subalgebra $\cA_0$ of example~2.1 of \cite{R30}. Accordingly
$\sum_z \a_z(M_{\tilde h}^*\l_f M_{\tilde k})$ exists in the weak sense
discussed in \cite{R30}, and this sum is an element of the
``generalized fixed-point algebra'' for $\a$ as defined in \cite{R30}.
Towards the end of example~2.1 it is explained that this
generalized fixed-point algebra is, in the case of this example,
just the C*-algebra generated by the left regular representation
(for the roles reversed). Our proposition above yields $\l_{(h^**k)f}$,
which is indeed in this C*-algebra. This general setting is 
explored further in \cite{R31}, especially in sections 7 and 8. 

We do not, strictly speaking, need the following proposition, but
it provides some perspective on the path that we will take below,
e.g. in Proposition~\ref{jip2}. 

\begin{prop}
\label{lip}
Let $f, h, k \in c_c$. Then
\[ L_D(\l_{(h^**k)f}) \leq \|h\|_2\|k\|_2L_D(f).  \]
\end{prop}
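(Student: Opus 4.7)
The plan is to imitate, one level up, the Cauchy--Schwarz argument in the proof of Proposition~\ref{cutoff}, but applied to $[D,\l_f]$ in place of $\l_f$. The key identity I would establish is
\[
[D,\l_{(h^**k)f}] \;=\; \sum_z M_{A_z}\,[D,\l_f]\,M_{B_z},
\]
where $A_z(x)=\bar h(z^{-1}x^{-1})$ and $B_z(y)=k(z^{-1}y^{-1})$, with convergence in the weak operator topology, after which the norm bound falls out immediately.

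First I would verify the identity at the level of kernels. Since $[D,\l_g]$ has kernel $(\Lth(x)-\Lth(y))g(xy^{-1})$ and since
\[
(h^**k)(xy^{-1}) = \sum_z \bar h(z^{-1}x^{-1})\,k(z^{-1}y^{-1})
\]
(obtained by the substitution $z\mapsto xz$ in $(h^**k)(u)=\sum_z\bar h(z^{-1})k(z^{-1}u)$, exactly as in the proof of Proposition~\ref{cutoff}), the kernel of $[D,\l_{(h^**k)f}]$ is
\[
\sum_z \bar h(z^{-1}x^{-1})\cdot(\Lth(x)-\Lth(y))f(xy^{-1})\cdot k(z^{-1}y^{-1}),
\]
whose $z$-th summand is precisely the kernel of the bounded operator $M_{A_z}[D,\l_f]M_{B_z}$.

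Next I would deduce the norm bound by the same two-step Cauchy--Schwarz argument used at the end of the proof of Proposition~\ref{cutoff}. For any $\xi,\eta\in\lt$, an inner Cauchy--Schwarz yields
\[
\big|\<M_{A_z}[D,\l_f]M_{B_z}\xi,\eta\>\big|
\le \|[D,\l_f]\|\,\|M_{B_z}\xi\|_2\,\|M_{A_z}^*\eta\|_2,
\]
and an outer Cauchy--Schwarz in $z$ then bounds $\sum_z\big|\<M_{A_z}[D,\l_f]M_{B_z}\xi,\eta\>\big|$ by $\|[D,\l_f]\|$ times $\bigl(\sum_z\|M_{B_z}\xi\|_2^2\bigr)^{1/2}\bigl(\sum_z\|M_{A_z}^*\eta\|_2^2\bigr)^{1/2}$. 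A change of variables in $z$ identical to that in Proposition~\ref{cutoff} gives $\sum_z\|M_{B_z}\xi\|_2^2=\|k\|_2^2\|\xi\|_2^2$ and $\sum_z\|M_{A_z}^*\eta\|_2^2=\|h\|_2^2\|\eta\|_2^2$, whence
\[
\|[D,\l_{(h^**k)f}]\|\le\|h\|_2\,\|k\|_2\,\|[D,\l_f]\|=\|h\|_2\,\|k\|_2\,L_D(f),
\]
which is the asserted inequality; the absolute convergence of the inner products also justifies the weak operator convergence of the series.

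The one subtlety to flag is that one should \emph{not} try to derive the identity above by applying $[D,\,\cdot\,]$ termwise to the series of Proposition~\ref{cutoff}: since $\rho_z$ does not commute with $D$, such a termwise application would produce spurious boundary terms involving $[D,\rho_z]$. The kernel-level computation circumvents this because the factor $\Lth(x)-\Lth(y)$ is absorbed into the ``middle'' slot as $[D,\l_f]$, leaving on the outside only the pointwise multiplication operators $M_{A_z}$ and $M_{B_z}$, to which the Cauchy--Schwarz argument of Proposition~\ref{cutoff} applies verbatim.
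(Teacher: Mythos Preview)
Your proof is correct and is essentially the same as the paper's. Your multiplication operators $M_{A_z}$ and $M_{B_z}$ are precisely the paper's $\rho_z^* M_{\tilde h}^*\rho_z$ and $\rho_z^* M_{\tilde k}\rho_z$; the paper derives the identity by applying $[D,\cdot]$ termwise (in the weak sense, against $\xi,\eta\in c_c$) to the series of Proposition~\ref{cutoff} and using that these conjugated multiplication operators commute with $D$, which is exactly the mechanism your kernel computation encodes, and both arguments finish with the identical Cauchy--Schwarz bound.
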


\begin{proof}
Because $(h^**k)f $ has finite support, $[D, \l_{(h^**k)f} ]$ is
a bounded operator. Let $\xi, \eta \in c_c$, so they are in the domain of $D$. Then
\[
\<[D,\l_{(h^**k)f} ] \xi, \ \eta\> 
= \<\l_{(h^**k)f} \xi, \ D\eta\> - \<\l_{(h^**k)f} D\xi, \ \eta\>  ,
\]
so by Proposition \ref{cutoff} 
\[
\<[D,\l_{(h^**k)f} ] \xi, \ \eta\> 
= \sum_z \<D\rho_z^*M_h^* \l_f M_k\rho_z \xi, \ \eta\> - 
\<\rho_z^*M_h^*\l_fM_k\rho_z D\xi, \ \eta\>.
\]
But, if by slight abuse of notation we let $\rho_z(h)$ denote the corresponding
right translate
of $h$, we see that $\rho_z^*M_h^*\rho_z = M^*_{\rho_z(h)}$, which commutes with $D$,
and similarly for $M_k$. Furthermore $\rho_z $ commutes with $\l_f$. It follows that 
\[
\<[D,\l_{(h^**k)f} ] \xi, \ \eta\> 
=\sum_z \<\rho_z^*M_h^*[D, \l_f]M_k\rho_z \xi, \ \eta \>.
\] 
Consequently
\[
|\<[D, \l_{(h^**k)f} ]\xi, \ \eta\>| \leq L_D(f) \|h\|_2\|k\|_2\|\xi\|_2\|\eta\|_2
\]
for much the same reasons as given near the end of the proof of
Proposition~\ref{cutoff}.
\end{proof}


\subsection{The seminorm $J_D$ 
and cutoff functions}

Later in the proof we will partly lose control 
of $L_D(gf)$ for certain functions $g$ of interest. 
It is possible to retain some control, as follows. 
Notice that if, for any $r > 0$, we set $s = 2r$
in Proposition \ref{weight}, we obtain
\[
\|(I-M_{2r}) \l_f M_r\| \  \le \  r^{-1}L_D(f)  . 
\]
This motivates the following definition.

\begin{defn}
\label{control}
The seminorm $J_D$ on $c_c$ is defined by 
\[ J_D(f) = \sup\{r\|(I-M_{2r}) \l_f M_r\|:   r>0\}  \]
for any $f \in c_c$. 
\end{defn}
The inequality
\[
J_D(f) \leq L_D(f) \ \text{ for all $f \in c_c$}
\]
is an equivalent formulation of the special case $s=2r$
of Proposition~\ref{weight}. 

We emphasize that for the rest of this section, and for much of the next, 
we use $J_D$ but not $L_D$, although some steps do
have versions for $L_D$. Only near the end
of the next section will we use the fact that  $J_D \leq L_D$.
We will need:

\begin{prop}
\label{norm}
Let $f \in c_c$. If $f(x) \neq 0$ for some $x \neq e$, then $J_D(f) \neq 0$.
Thus the seminorm $J_D$ is a norm on the subspace
$\{f \in c_c: f(e) = 0\}$.
\end{prop}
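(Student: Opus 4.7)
The plan is to test the operator $(I-M_{2r})\lambda_f M_r$ on the very simple vector $\delta_e \in \ell^2(G)$ and exploit the fact that $\Lth(e)=0$.

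First I would observe that since $\Lth(e)=0$, we have $\delta_e \in B(r)$ for every $r\ge 0$, so $M_r \delta_e = \delta_e$. Consequently
\[
\bigl((I-M_{2r})\lambda_f M_r \delta_e\bigr)(y) \;=\; \chi_{G\setminus B(2r)}(y)\,f(y),
\]
i.e.\ the restriction of $f$ to the complement of $B(2r)$. Thus the norm of this vector in $\ell^2$ is precisely $\bigl(\sum_{y\notin B(2r)}|f(y)|^2\bigr)^{1/2}$.

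Next, given the hypothesis, pick $x_0\ne e$ with $f(x_0)\ne 0$, and set $L = \Lth(x_0) > 0$. Choose any $r \in (0, L/2)$, for instance $r = L/4$. Then $\Lth(x_0) = L > 2r$, so $x_0 \notin B(2r)$, and therefore
\[
\bigl\|(I-M_{2r})\lambda_f M_r \delta_e\bigr\|_2 \;\ge\; |f(x_0)| \;>\; 0.
\]
Since $\|\delta_e\|_2 = 1$, this gives $\|(I-M_{2r})\lambda_f M_r\| \ge |f(x_0)|$, so by the definition of $J_D$,
\[
J_D(f) \;\ge\; r\,\|(I-M_{2r})\lambda_f M_r\| \;\ge\; \tfrac{L}{4}\,|f(x_0)| \;>\; 0.
\]
The second statement of the proposition then follows: $J_D$ is already a seminorm, and on the subspace $\{f\in c_c:f(e)=0\}$ the condition $J_D(f)=0$ forces $f(x)=0$ for every $x\ne e$ by the contrapositive of what we just proved, combined with $f(e)=0$, so $f\equiv 0$.

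There is really no obstacle here; the argument is a one-line computation once one notices that $\delta_e$ is automatically in the range of every $M_r$. The only point to verify carefully is that $\Lth(x_0)>0$ for $x_0\ne e$, which is property (3) of a length function.
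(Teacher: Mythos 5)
Your proof is correct and follows essentially the same route as the paper's: both apply $(I-M_{2r})\lambda_f M_r$ to $\delta_e$, observe the result is $(I-M_{2r})f$, and choose $r$ with $2r<\Lth(x_0)$ so that the value at $x_0$ survives. Your version merely makes the quantitative lower bound $J_D(f)\ge \tfrac{L}{4}|f(x_0)|$ explicit, which the paper leaves implicit.
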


\begin{proof}
Let $\d_e$ be the delta-function at $e$, viewed as an element of $\lt$.
Then for any $r > 0$ we have $((I-M_{2r}) \l_f M_r)(\d_e) = (I-M_{2r})(f)$,
where on the right-hand side $f$ is viewed as an element of $\lt$.
Let $x \in G$ be such that $f(x) \neq 0$ and $x \neq e$ so that
$\Lth(x) \neq 0$. Choose $r>0$ such that $2r < \Lth(x)$. Then
$(M_{2r}f)(x) = 0$, so that $(I-M_{2r})(f)(x) \neq 0$, and thus
$J_D(f) \neq 0$.
\end{proof}

We now proceed to develop
properties of $J_D$ with respect to cutoffs of functions.

\begin{prop}
\label{bound}
For a given $r>0$, suppose that $h$ is supported on
$G \setminus B(2r)$ and that $k$ is supported on $B(r)$. 
Then for any $f \in c_c$ we have
\[
\|\l_{(h^*k)f}\| \le r^{-1} \|h\|_2 \|k\|_2 J_D(f)   .
\]
\end{prop}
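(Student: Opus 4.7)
\medskip

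\noindent\textbf{Proof plan for Proposition \ref{bound}.}
The plan is to mimic the argument of Proposition \ref{cutoff}, but insert the projections $M_r$ and $I-M_{2r}$ at the key step so that we can bound the middle piece by $J_D(f)$ rather than by $\|\l_f\|$.

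First I would record the support properties after passing to the tildes. Since $\Lth(x^{-1})=\Lth(x)$, the hypothesis that $h$ is supported off $B(2r)$ implies $\tilde h$ is supported off $B(2r)$, so $M_{\tilde h}=M_{\tilde h}(I-M_{2r})$; similarly $M_{\tilde k}=M_r M_{\tilde k}$. Hence, for any $z\in G$,
\[
\rho_z^* M_{\tilde h}^* \l_f M_{\tilde k}\rho_z
= \rho_z^* M_{\tilde h}^*(I-M_{2r})\,\l_f\, M_r M_{\tilde k}\rho_z.
\]

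Next, I would run the same pairing computation as in the proof of Proposition~\ref{cutoff}. For $\xi,\eta\in c_c$,
\[
\<\l_{(h^**k)f}\xi,\eta\>
= \sum_z \<\l_f M_{\tilde k}\rho_z\xi,\, M_{\tilde h}\rho_z\eta\>
= \sum_z \<(I-M_{2r})\l_f M_r\, M_{\tilde k}\rho_z\xi,\, M_{\tilde h}\rho_z\eta\>,
\]
where the second equality uses the support observation above. Estimating each summand by Cauchy--Schwarz in $\lt$ and pulling out the operator norm of the middle factor gives
\[
|\<\l_{(h^**k)f}\xi,\eta\>|
\le \|(I-M_{2r})\l_f M_r\| \sum_z \|M_{\tilde k}\rho_z\xi\|\,\|M_{\tilde h}\rho_z\eta\|.
\]

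Finally I would apply Cauchy--Schwarz in the $z$-variable and use the identity
\[
\sum_z \|M_{\tilde k}\rho_z\xi\|_2^2 = \|k\|_2^2\|\xi\|_2^2
\]
(and the analogous identity for $h$, $\eta$) that was already verified in the proof of Proposition~\ref{cutoff}. This yields
\[
|\<\l_{(h^**k)f}\xi,\eta\>|
\le \|(I-M_{2r})\l_f M_r\|\cdot \|h\|_2\|k\|_2\|\xi\|_2\|\eta\|_2.
\]
The conclusion then follows from the definition of $J_D$, which gives $\|(I-M_{2r})\l_f M_r\|\le r^{-1}J_D(f)$. There is no real obstacle; the only point requiring care is that the localization projections $(I-M_{2r})$ and $M_r$ must be inserted \emph{between} $M_{\tilde h}^*$ and $\l_f$, and between $\l_f$ and $M_{\tilde k}$, so that they can be absorbed into the middle factor without affecting the Cauchy--Schwarz sums over $z$.
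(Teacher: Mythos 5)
Your proposal is correct and follows essentially the same route as the paper's own proof: both start from the series decomposition of Proposition \ref{cutoff}, use the symmetry $\Lth(x^{-1})=\Lth(x)$ to insert the projections $(I-M_{2r})$ and $M_r$ around $\l_f$, and then apply Cauchy--Schwarz in $z$ together with the definition of $J_D$. No gaps.
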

\begin{proof}
For any $\xi, \eta \in c_c$ we have, by Proposition \ref{cutoff}, 
\begin{align*}
| \<\l_{(h^**k)f} \xi, \ \eta\> | &= |\sum_z  \<\l_f M_{\tilde k}\rho_z \xi, \ M_{\tilde h}\rho_z\eta\> | \\
&=  |\sum_z  \<(I-M_{2r})\l_f M_r M_{\tilde k}\rho_z \xi, \ M_{\tilde h}\rho_z\eta\> |   \\
& \le r^{-1}J_D(f)\sum_z \| M_{\tilde k}\rho_z \xi\|_2\|M_{\tilde h}\rho_z\eta\|_2    \\
&\leq r^{-1}(\sum_u \| M_{\tilde k}\rho_u \xi\|^2_2)^{1/2}    
(\sum_v\|M_{\tilde h}\rho_v\eta\|^2_2)^{1/2} J_D(f)    \\
& = r^{-1} \|h\|_2 \|k\|_2    \|\xi\|_2 \|\eta\|_2 J_D(f) ,
\end{align*}
for reasons given near the end of the proof of Proposition \ref{cutoff}.
\end{proof}

Quite parallel to Proposition \ref{lip} we have:

\begin{prop}
\label{jip1}
Let $f, h, k \in c_c$. Then
\[
J_D((h^**k)f) \leq \|h\|_2\|k\|_2 J_D(f)    .
\]
\end{prop}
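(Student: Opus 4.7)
The plan is to establish, for every $s>0$, the operator norm bound
\[ \|(I-M_{2s})\lambda_{(h^**k)f}M_s\| \;\le\; s^{-1}\|h\|_2\|k\|_2 J_D(f), \]
which by Definition~\ref{control} is exactly the inequality required. This mimics the structure of the proof of Proposition~\ref{bound}, but without the support restrictions on $h,k$: the role played there by the identities $M_{\tilde h}=(I-M_{2r})M_{\tilde h}$ and $M_{\tilde k}=M_r M_{\tilde k}$ must now be played by an algebraic identity coming from conjugation by $\rho_z$.

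For $\xi,\eta\in c_c$, pair and expand via Proposition~\ref{cutoff} to obtain
\[ \langle(I-M_{2s})\lambda_{(h^**k)f}M_s\xi,\eta\rangle \;=\; \sum_z\langle \lambda_f M_{\tilde k}\rho_z M_s\xi,\ M_{\tilde h}\rho_z(I-M_{2s})\eta\rangle. \]
Insert $\rho_z\rho_z^*=I$ on both sides of each inner product and use that $\lambda_f$ commutes with $\rho_z^*$. A direct computation shows that conjugation $\rho_z^* M_g \rho_z$ of the multiplication operator by any function $g$ is again a multiplication operator, by the function $x\mapsto g(xz)$. Thus each summand becomes
\[ \langle \lambda_f M_{k_z} M_s\xi,\ M_{h_z}(I-M_{2s})\eta\rangle, \]
where $k_z(x)=\tilde k(xz)$ and $h_z(x)=\tilde h(xz)$.

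The essential observation is that $M_{k_z}$ and $M_{h_z}$ now commute with $M_s$ and $(I-M_{2s})$ respectively, since all four are pointwise multiplication operators. Reshuffling produces
\[ \langle(I-M_{2s})\lambda_f M_s\cdot M_{k_z}\xi,\ M_{h_z}\eta\rangle, \]
whose absolute value is at most $s^{-1}J_D(f)\|M_{k_z}\xi\|_2\|M_{h_z}\eta\|_2$ by the very definition of $J_D$. Summing over $z$ and applying Cauchy--Schwarz reduces everything to the identities $\sum_z|k_z(x)|^2=\|k\|_2^2$ and $\sum_z|h_z(x)|^2=\|h\|_2^2$ valid at each $x\in G$, which give
\[ \sum_z\|M_{k_z}\xi\|_2^2=\|k\|_2^2\|\xi\|_2^2 \quad\text{and}\quad \sum_z\|M_{h_z}\eta\|_2^2=\|h\|_2^2\|\eta\|_2^2. \]

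I do not anticipate a serious obstacle: once one notices that $\rho_z$ commutes with $\lambda_f$ and intertwines multiplication operators among themselves, the computation is algebraic. The one subtle point is that the commutation turns the \emph{inner} $M_{\tilde k},M_{\tilde h}$ into multiplication operators that commute with $M_s$ and $I-M_{2s}$ respectively --- this is what permits a direct invocation of the $J_D$ bound on $(I-M_{2s})\lambda_f M_s$, rather than merely a weighted bound on some shifted configuration. The summation identities are the same ones already exploited in the proof of Proposition~\ref{cutoff}.
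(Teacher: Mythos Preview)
Your proof is correct and follows essentially the same route as the paper: expand via the series of Proposition~\ref{cutoff}, use that $\rho_z$ commutes with $\lambda_f$ and conjugates multiplication operators to multiplication operators (which therefore commute with the spectral projections $M_s$ and $I-M_{2s}$), apply the definition of $J_D$, and finish with Cauchy--Schwarz and the $\ell^2$ summation identity. Your explicit insertion of $\rho_z\rho_z^*$ to carry out the conjugation makes the commutation step clearer than the paper's compressed passage from its second to third displayed line.
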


\begin{proof}
The justifications for the calculations in the proof are very similar to those in 
the proof of Proposition \ref{lip}.
For any $r>0$ we have, by Proposition \ref{cutoff}, 
\begin{align*}
&| \<(I-M_{2r})\l_{(h^**k)f}M_r \xi, \ \eta\> | = | \<\l_{(h^**k)f}M_r \xi, \ (I-M_{2r})\eta\> |   \\
&=  |\sum_z  \<\l_f  M_{\tilde k}\rho_z M_r \xi, \ M_{\tilde h}\rho_z(I-M_{2r})\eta\> |   \\
&=  |\sum_z  \<(I-M_{2r})\l_f M_r  M_{\tilde k}\rho_z  \xi, \ M_{\tilde h}\rho_z(I-M_{2r})\eta\> |   \\
&\leq  \sum_z  |\<(I-M_{2r})\l_f M_r  M_{\tilde k}\rho_z  \xi, \ M_{\tilde h}\rho_z(I-M_{2r})\eta\> |   \\
&\leq r^{-1}  \|h\|_2\|k\|_2 J_D(f) \|\xi\|_2\|\eta\|_2    ,
\end{align*}
for reasons given near the end of the proof of Proposition \ref{cutoff}.
\end{proof}

\begin{cor}
\label{subsets}
For given $r>0$, suppose that $E \subset B(r)$ and $F \subset G\setminus B(2r)$,
and set $k = \chi_E$ and $h = \chi_F$. Then for any $f \in c_c$ we have
\[
\|\l_{(h^**k)f}\| \leq r^{-1} |E|^{1/2} |F|^{1/2} J_D(f)   ,
\]
and
\[
J_D((h^**k)f) \leq  |E|^{1/2} |F|^{1/2} J_D(f)   .
\]
\end{cor}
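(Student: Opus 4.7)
The plan is simply to substitute the indicator functions $h=\chi_F$ and $k=\chi_E$ into the two preceding propositions and read off the stated bounds; the corollary is a direct specialization that requires no new ideas.

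First I would verify the hypotheses of Proposition~\ref{bound}: since $E\subset B(r)$, the function $k=\chi_E$ is supported on $B(r)$, and since $F\subset G\setminus B(2r)$, the function $h=\chi_F$ is supported on $G\setminus B(2r)$. Next I would compute the $\ell^2$ norms of these indicator functions, which is immediate: $\|h\|_2 = \big(\sum_{x\in G}|\chi_F(x)|^2\big)^{1/2} = |F|^{1/2}$, and similarly $\|k\|_2 = |E|^{1/2}$.

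Applying Proposition~\ref{bound} directly yields
\[
\|\l_{(h^**k)f}\| \le r^{-1}\|h\|_2\|k\|_2 J_D(f) = r^{-1}|E|^{1/2}|F|^{1/2}J_D(f),
\]
which is the first assertion. For the second assertion, I would apply Proposition~\ref{jip1}, which does not require any support conditions on $h$ or $k$; this gives
\[
J_D((h^**k)f) \le \|h\|_2\|k\|_2 J_D(f) = |E|^{1/2}|F|^{1/2}J_D(f),
\]
as claimed.

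There is no real obstacle here: the corollary is a direct specialization of Propositions~\ref{bound} and \ref{jip1} to indicator functions, and the only substantive content is the observation $\|\chi_E\|_2 = |E|^{1/2}$. The purpose of stating the corollary is evidently to prepare for later applications in which cutoff functions are built out of convolutions of indicator functions of finite sets, where the cardinalities $|E|$ and $|F|$ will be controlled using the bounded doubling hypothesis.
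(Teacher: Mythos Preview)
Your proposal is correct and matches the paper's approach: the corollary is stated without proof in the paper precisely because it is an immediate specialization of Propositions~\ref{bound} and~\ref{jip1} to indicator functions, using $\|\chi_E\|_2=|E|^{1/2}$ and $\|\chi_F\|_2=|F|^{1/2}$.
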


\subsection{Cutoff functions approximating indicator functions of annuli}

\begin{notation}
\label{annulus}
For $t > s > 0$ we 
define the annulus $A(s,t)$ to be
\begin{equation} 
A(s,t) = B(t)\setminus B(s) =  \{x\in G: s<\Lth(x)\le t\}. 
\end{equation}
\end{notation}

\begin{cor}
\label{power}
For given $t >s>2r>0$ let $k = |B(r)|^{-1}\chi_{B(r)}$ and
$h = \chi_{A(s,t)}$, and let $g = h^**k$. Then for any $f \in c_c$ we have
\[
\|\l_{gf}\| \leq r^{-1}(|B(r)|^{-1}|B(t)|)^{1/2} J_D(f)   ,
\]
and
\[
J_D(gf) \leq (|B(r)|^{-1}|B(t)|)^{1/2}J_D(f)    .
\]
\end{cor}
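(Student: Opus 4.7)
My plan is to observe that Corollary \ref{power} is essentially a direct specialization of Proposition \ref{bound} and Proposition \ref{jip1} to the specific choices of $h$ and $k$ given.

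First, I would check that the hypotheses of Proposition \ref{bound} are satisfied. The function $k = |B(r)|^{-1}\chi_{B(r)}$ is by construction supported on $B(r)$, and $h = \chi_{A(s,t)}$ is supported on $A(s,t) = B(t)\setminus B(s)$. Since $s > 2r$ by hypothesis, we have $A(s,t) \subset G\setminus B(s) \subset G\setminus B(2r)$, so $h$ is indeed supported on $G\setminus B(2r)$ as required. Thus both Proposition \ref{bound} and Proposition \ref{jip1} apply with these choices.

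Next, I would compute the $\ell^2$ norms. Directly,
\[
\|k\|_2^2 = |B(r)|^{-2}\cdot |B(r)| = |B(r)|^{-1},
\]
so $\|k\|_2 = |B(r)|^{-1/2}$. Similarly, $\|h\|_2^2 = |A(s,t)| \le |B(t)|$, so $\|h\|_2 \le |B(t)|^{1/2}$. Multiplying, $\|h\|_2\|k\|_2 \le (|B(r)|^{-1}|B(t)|)^{1/2}$.

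Finally, I would just plug these bounds into Proposition \ref{bound} to obtain
\[
\|\l_{gf}\| \le r^{-1}\|h\|_2\|k\|_2 J_D(f) \le r^{-1}(|B(r)|^{-1}|B(t)|)^{1/2} J_D(f),
\]
and into Proposition \ref{jip1} to obtain
\[
J_D(gf) \le \|h\|_2\|k\|_2 J_D(f) \le (|B(r)|^{-1}|B(t)|)^{1/2} J_D(f),
\]
which are the two claimed inequalities. There is no real obstacle here; the step requiring any care is only the verification that $s > 2r$ ensures $\operatorname{supp}(h) \subset G\setminus B(2r)$, so that Proposition \ref{bound} is applicable. The role of the corollary is simply to record the consequence of Propositions \ref{bound} and \ref{jip1} for the specific annular cutoff $g = h^**k$ that will be used in the sequel.
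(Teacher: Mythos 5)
Your proposal is correct and matches the paper's (implicit) derivation: the corollary is recorded there as an immediate consequence of Proposition~\ref{bound} and Proposition~\ref{jip1} (equivalently, of Corollary~\ref{subsets} with $E=B(r)$, $F=A(s,t)$, after dividing by the normalizing factor $|B(r)|$), using exactly the support check $s>2r$ and the norm computations $\|k\|_2=|B(r)|^{-1/2}$, $\|h\|_2\le |B(t)|^{1/2}$ that you give. Nothing further is needed.
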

One can consider here that we are interested in restricting $f$ to $A(s,t)$, as $\chi_{A(s,t)}f$, 
but we are first ``smoothing'' $\chi_{A(s,t)}$ by convolving it with the probability
function $k$ centered at 0, to give $gf$.

The following facts are easily verified:

\begin{lem}
\label{support}
For $g$ defined as in Corollary \ref{power}, 
we have $0 \leq g \leq 1$, and furthermore
\begin{itemize}
\item[a)] If $g(x) \neq 0$ then $s-r < \Lth(x) \leq t+r$, that is, $x \in A(s-r,t+r)$.
\item[b)] If $x \in A(s+r,t-r)$, that is, $s+r < \Lth(x) \leq t-r$, then $g(x) = 1$.
\end{itemize}
\end{lem}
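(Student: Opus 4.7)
The plan is to unwind the definition of $g=h^**k$ as a sum, recognize it as a normalized count, and then read off the three assertions via the triangle inequality for $\Lth$.

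First I would write out
\[
g(x) = (h^**k)(x) = \sum_{y\in G} \overline{h(y^{-1})}\,k(y^{-1}x).
\]
Since $\Lth$ is symmetric under inversion, $h(y^{-1}) = \chi_{A(s,t)}(y^{-1}) = \chi_{A(s,t)}(y)$; and $k$ is a real non-negative multiple of $\chi_{B(r)}$. Hence
\[
g(x) = |B(r)|^{-1}\,\bigl|\{y\in G : y\in A(s,t)\ \text{and}\ \Lth(y^{-1}x)\le r\}\bigr|.
\]
In particular $g(x)\ge 0$. Moreover, for any fixed $x$ the parametrization $y\mapsto b:=y^{-1}x$ sets up a bijection between $\{y:\Lth(y^{-1}x)\le r\}$ and $B(r)$, so the cardinality in the numerator is at most $|B(r)|$ and thus $g(x)\le 1$.

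For part (a), if $g(x)\neq 0$ then there exists $y$ with $s<\Lth(y)\le t$ and $\Lth(y^{-1}x)\le r$. The triangle inequality gives $\Lth(x)\le \Lth(y)+\Lth(y^{-1}x)\le t+r$, and also $\Lth(y)\le \Lth(x)+\Lth(x^{-1}y) = \Lth(x)+\Lth(y^{-1}x)$, so $\Lth(x)\ge \Lth(y)-r > s-r$. Thus $x\in A(s-r,t+r)$.

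For part (b), suppose $s+r<\Lth(x)\le t-r$. I claim that every $y$ with $\Lth(y^{-1}x)\le r$ lies in $A(s,t)$: the triangle inequality yields $\Lth(y)\le \Lth(x)+\Lth(y^{-1}x)\le (t-r)+r=t$ and $\Lth(y)\ge \Lth(x)-\Lth(y^{-1}x)> (s+r)-r=s$. Consequently the set counted above is the \emph{entire} ball $\{y:\Lth(y^{-1}x)\le r\}$, which has cardinality exactly $|B(r)|$, giving $g(x)=1$. The only ``obstacle'' is bookkeeping with the symmetry $h^*(y)=h(y^{-1})$ and the bijection $y\leftrightarrow y^{-1}x$; everything else is direct application of the triangle inequality.
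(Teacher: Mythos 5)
Your proof is correct, and it is exactly the verification the paper has in mind: the paper states this lemma without proof (``The following facts are easily verified''), and your interpretation of $g(x)$ as the normalized count $|B(r)|^{-1}\,|\{y\in A(s,t):\Lth(y^{-1}x)\le r\}|$ together with the triangle inequality and the symmetry $\Lth(y^{-1})=\Lth(y)$ is the intended direct argument. The convolution bookkeeping matches the paper's convention $(h^**k)(w)=\sum_z \overline{h(z^{-1})}\,k(z^{-1}w)$, so there is nothing to add.
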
 

For later use we draw the following consequences from 
Corollary \ref{power} and the above lemma. 
Suppose that $t > s >2r >0$, 
and suppose that $f \in c_c$ vanishes identically
on both the annuli $A(s-r, s+ r)$ and $A(t-r, t+r)$ . Then
\[
\|\l_{f\chi_{A(s+r, t-r)}}\| \leq r^{-1}(|B(r)|^{-1}|B(t)|)^{1/2} J_D(f)   .
\]
and
\[
J_D(f\chi_{A(s+r, t-r)}) \leq (|B(r)|^{-1}|B(t)|)^{1/2}J_D(f)    .
\]
If we reparametrize this inequality by sending $t$ to $t+r$ and $s$ to $s-r$
we obtain the following result:

\begin{prop}
\label{keyprop}
Suppose that $t > s >3r >0$, 
and suppose that $f \in c_c$ vanishes identically on both
the annuli $A(s-2r,s )$ and $A(t, t+2r)$. Then
\[
\|\l_{f\chi_{A(s, t)}}\| \leq r^{-1}(|B(r)|^{-1}|B(t+r)|)^{1/2} J_D(f).
\]
and
\[
J_D(f\chi_{A(s, t)}) \leq (|B(r)|^{-1}|B(t + r)|)^{1/2}J_D(f).
\]
\end{prop}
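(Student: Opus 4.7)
The plan is simply to verify that Proposition \ref{keyprop} is the reparametrization described in the sentence immediately preceding its statement, and that this reparametrization is justified by Corollary \ref{power} together with Lemma \ref{support}. So what really needs to be done is to make the substitution clean, check that hypotheses line up, and record the underlying identity $gf=f\chi_{A(s,t)}$ that powers everything.

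First I would start from the unreparametrized inequalities displayed just above: for any $t'>s'>2r>0$ and any $f\in c_c$ vanishing identically on $A(s'-r,s'+r)\cup A(t'-r,t'+r)$, one has
\[
\|\l_{f\chi_{A(s'+r,\,t'-r)}}\|\le r^{-1}(|B(r)|^{-1}|B(t')|)^{1/2}J_D(f)
\]
and the companion bound for $J_D$. These two inequalities follow by combining Corollary \ref{power}, applied to $g=h^{*}*k$ with $h=\chi_{A(s',t')}$ and $k=|B(r)|^{-1}\chi_{B(r)}$, with the fact from Lemma \ref{support} that $g$ is supported in $A(s'-r,t'+r)$, equal to $1$ on $A(s'+r,t'-r)$, and takes values in $[0,1]$. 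Because $f$ is assumed to vanish on the two transition annuli $A(s'-r,s'+r)$ and $A(t'-r,t'+r)$, the product $gf$ is supported inside $A(s'+r,t'-r)$, and on that set $g\equiv 1$, so $gf = f\chi_{A(s'+r,t'-r)}$ identically on $G$.

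Next I would reparametrize by setting $s'=s-r$ and $t'=t+r$. The hypothesis $t'>s'>2r$ becomes $t+r>s-r>2r$, i.e., $s>3r$ together with $t>s-2r$; adjoining the natural nondegeneracy $t>s$ recovers exactly the hypothesis $t>s>3r$ of Proposition \ref{keyprop}. The two transition annuli transform as $A(s'-r,s'+r)=A(s-2r,s)$ and $A(t'-r,t'+r)=A(t,t+2r)$, matching precisely the vanishing conditions imposed on $f$ in the statement. Finally the annulus that survives becomes $A(s'+r,t'-r)=A(s,t)$, and the weight $|B(t')|$ becomes $|B(t+r)|$. Substituting these into the two inequalities above yields exactly the two conclusions of Proposition \ref{keyprop}.

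There is no real obstacle; the only thing to guard against is an off-by-$r$ slip in matching annuli. For that reason I would present the proof by literally performing the substitution in both displayed inequalities, pointing the reader back to Corollary \ref{power} and Lemma \ref{support} for the underlying estimates.
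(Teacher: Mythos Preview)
Your proposal is correct and follows exactly the paper's own route: derive the unreparametrized inequalities from Corollary~\ref{power} and Lemma~\ref{support} via the identity $gf=f\chi_{A(s'+r,t'-r)}$, then substitute $s'=s-r$, $t'=t+r$. You are in fact slightly more explicit than the paper in spelling out why the vanishing of $f$ on the transition annuli forces $gf$ to coincide with $f\chi_{A(s,t)}$, and in checking that the hypothesis $t'>s'>2r$ is implied by $t>s>3r$.
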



\section{Application to Nilpotent-by-finite groups}
\label{nilp}

We assume for the remainder of the paper that $\Lth$ is a length function
with the property of bounded doubling. 

\begin{notation}
\label{noter}
For a fixed $R \in \bR$ with $R \geq 2$,  and for any 
natural numbers $m, n$, we set
$\tB(n) = B(R^n)$ and we set $\tdA(m,n) = A(R^m,R^n)$. 
For $n \geq 1$ we then set $k_n = |\tB(n-1)|^{-1} \chi_{\tB(n-1)}$ and
$h_n = \chi_{\tdA(n,n+1)}$, and $g_n = h_n*k_n$. 
\end{notation}

These definitions imply that $h_n^* = h_n$,
and the support of $g_n$ is contained in $A(R^n-R^{n-1}, R^{n+1} + R^{n-1})$.
We now fix a 
parameter $R$ of
the form $R=2^K$, with $K\in\naturals$ to be chosen later.
In particular, $R\ge 2$.
This $R$ will be used implicitly for much of the rest of this section. Then from the inequality \eqref{generaldoubling} we obtain
\[
|\tB(n-1)|^{-1} |\tB(n+1)| \leq \Clth^{2K}.
\]
Notice that the bound on the right is independent of $n$. 
Notice also that
$R^{n+1} - R^{n-1} \geq 2R^{n-1}$ because $R \geq 2$.

In the series of results below we employ the following notation. By
$C_k$ we denote a finite, positive quantity which depends only on the constant
$\Clth$ in the formulation \eqref{polygrowthdefn} of the bounded doubling hypothesis for $\Lth$,
and on the supplementary quantity $R$ which is to be chosen later in the proof.
In particular, each $C_k$ is independent of quantities $n,N$ that appear in the analysis.
Explicit expressions for each of these constants as functions of $\Clth,R$ can be extracted from
the steps below, but their precise values are of no intrinsic significance for our purposes.

We can apply Corollary \ref{power} to obtain:

\begin{lem}
\label{nilin}
For any $f \in c_c$ and for any $n \geq 1$ we have
\[
\|\l_{g_n f}\| \le C_1 R^{-n} J_D(f)
\]
where $C_1 =  \Clth^K$.
\end{lem}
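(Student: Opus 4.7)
The plan is a direct application of Corollary~\ref{power}. I would take $r = R^{n-1}$, $s = R^n$, $t = R^{n+1}$, so that $k_n = |B(r)|^{-1}\chi_{B(r)}$ and $h_n = \chi_{A(s,t)}$ match the setup of the corollary exactly. Since $h_n^*=h_n$, the function $g_n = h_n*k_n$ coincides with $h_n^**k_n$, which is the convolution that the corollary treats. The hypothesis $t>s>2r>0$ becomes $R^{n+1}>R^n\ge 2R^{n-1}$, which holds because $R=2^K\ge 2$. In the extreme case $R=2$ the middle inequality is only an equality; I would handle this by appealing directly to Proposition~\ref{bound}, whose support hypothesis requires merely that $h_n$ be supported on $G\setminus B(2r)$, not on its strict exterior, and this is satisfied because the support of $h_n$ lies in $\{x:\Lth(x)>R^n\}\subseteq G\setminus B(2R^{n-1})$.

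Applying Corollary~\ref{power} (or Proposition~\ref{bound}) with these parameters yields
\[
\|\lambda_{g_n f}\| \;\le\; R^{-(n-1)}\bigl(|\tilde B(n-1)|^{-1}\,|\tilde B(n+1)|\bigr)^{1/2} J_D(f).
\]
The remaining task is to control the size ratio. Since $R^{n+1} = 2^{2K}\cdot R^{n-1}$, iterating the bounded-doubling inequality $|B(2r)|\le\Clth|B(r)|$ exactly $2K$ times gives
\[
|\tilde B(n+1)| \;=\; |B(2^{2K}R^{n-1})| \;\le\; \Clth^{2K}|B(R^{n-1})| \;=\; \Clth^{2K}|\tilde B(n-1)|,
\]
(this is presumably the estimate labeled as \texttt{generaldoubling} in the referenced section). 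Taking square roots bounds the middle factor by $\Clth^K$. Combining the two estimates and absorbing the harmless prefactor $R^{-(n-1)} = R\cdot R^{-n}$ into the constant, one obtains $\|\lambda_{g_n f}\| \le R\,\Clth^K\,R^{-n} J_D(f)$, which is the stated inequality with $C_1$ understood as a constant depending only on $\Clth$ and $R$.

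There is essentially no obstacle: the argument is a bookkeeping exercise in matching Corollary~\ref{power}'s hypotheses to the definitions in Notation~\ref{noter}, followed by an invocation of bounded doubling. The only delicate point is the boundary case $R=2$ of the strict inequality $s>2r$, which is bypassed by using Proposition~\ref{bound} in place of the corollary. The stated value $C_1=\Clth^K$ reflects the exponent coming from doubling $2K$ times together with the square root; any multiplicative factor of $R$ accumulated from writing $R^{-(n-1)}$ as $R\cdot R^{-n}$ is absorbed into the generic constant.
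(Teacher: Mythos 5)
Your proposal is correct and is essentially the paper's own proof: the paper derives Lemma~\ref{nilin} by exactly this application of Corollary~\ref{power} with $r=R^{n-1}$, $s=R^n$, $t=R^{n+1}$, followed by the iterated doubling bound $|\tB(n+1)|\le \Clth^{2K}|\tB(n-1)|$. Your two side observations are both sound and worth noting: the borderline case $R=2$ (where $s=2r$) is indeed covered by Proposition~\ref{bound}, whose support hypothesis is non-strict (and is moot anyway once the paper later fixes $R\ge 4$), and the factor $r^{-1}=R\cdot R^{-n}$ really does make the constant $R\,\Clth^K$ rather than the stated $\Clth^K$ --- a harmless slip, since by the paper's convention $C_1$ need only depend on $\Clth$ and $R$.
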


It is natural to ask whether there exist length functions without bounded doubling
for which this lemma has an analogue.

\begin{prop}
\label{disjoint}
If $|n-m| \geq 2$ then $g_n$ and $g_m$ have disjoint support. 
\end{prop}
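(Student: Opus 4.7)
The plan is to argue directly from the support bound stated in the text immediately following Notation~\ref{noter}. Since $g_n=h_n*k_n$, with $h_n$ supported on $A(R^n,R^{n+1})$ and $k_n$ supported on $B(R^{n-1})$, writing any $x$ in the support of $g_n$ as $x=y\cdot z$ with $y\in A(R^n,R^{n+1})$ and $z=y^{-1}x\in B(R^{n-1})$ and applying the forward and reverse triangle inequalities for $\Lth$, one obtains $R^n-R^{n-1}<\Lth(x)\le R^{n+1}+R^{n-1}$. Thus $\mathrm{supp}(g_n)\subseteq A(R^n-R^{n-1},\,R^{n+1}+R^{n-1})$, the containment already recorded in the paper.

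By symmetry it suffices to treat $m\ge n+2$. To conclude disjointness I would check that the outer endpoint of the annulus containing $\mathrm{supp}(g_n)$ does not exceed the inner endpoint of the annulus containing $\mathrm{supp}(g_m)$, that is,
\[
R^{n+1}+R^{n-1} \ \le\ R^m-R^{m-1}.
\]
The inequality is most restrictive when $m-n$ is smallest, so it suffices to handle $m=n+2$; dividing through by $R^{n-1}$ reduces the claim to the one-variable polynomial inequality $R^2+1\le R^3-R^2$, equivalently $R^2(R-2)\ge 1$.

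This elementary inequality holds as soon as $R\ge 3$; since the authors take $R=2^K$ with $K\in\naturals$ to be chosen later, the statement tacitly requires $K\ge 2$ (so $R\ge 4$), under which $R^2(R-2)\ge 2R^2>1$ with a large margin. I would note in passing that the bare bound $R\ge 2$ is insufficient: a direct check in $G=\integers$ with $R=2$, $n=1$, $m=3$ gives $g_1(5),g_3(5)\neq 0$, consistent with the remark that $K$ is to be chosen later to validate all steps.

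There is no genuine obstacle beyond this $R$-sizing issue: the combinatorial core is nothing more than the triangle inequality together with a one-variable polynomial bound, so I anticipate that once the constraint on $K$ is made explicit the proof is a two-line verification.
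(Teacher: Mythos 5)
Your argument is the same as the paper's: bound the support of $g_n=h_n*k_n$ inside the annulus $A(R^n-R^{n-1},\,R^{n+1}+R^{n-1})$ via the triangle inequality, then check that the annuli for indices differing by at least $2$ do not overlap by comparing endpoints, the case of gap exactly $2$ being the binding one. The one substantive difference is that you are more careful than the paper about the size of $R$, and you are right to be: the paper's proof ends by asserting that $R^{m+1}+R^{m-1}<R^n-R^{n-1}$ ``because $R\ge 2$ and $n-m\ge 2$,'' which is false when $R=2$ and $n-m=2$, since then the left side equals $5\cdot 2^{m-1}$ while the right side equals $4\cdot 2^{m-1}$. Your counterexample on $\integers$ (with $\Lth(x)=|x|$, $R=2$, $n=1$, $m=3$, where both $g_1$ and $g_3$ are nonzero at $5$) shows that the statement itself, not merely the proof, fails at $R=2$. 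The constraint $R\ge 3$ (hence $K\ge 2$ when $R=2^K$) that you extract from $R^2(R-2)\ge 1$ is exactly what is needed; since the paper imposes $R\ge 4$ ``henceforth'' only later in Section~4, after this proposition, the main theorem is unaffected, but the proposition as stated and proved for all $R\ge 2$ is in error, and your version supplies the correct hypothesis and justification.
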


\begin{proof}
We can assume that $n > m$. If $g_m(x) \neq 0$ then $\Lth(x) \leq R^{m+1} + R^{m-1}$,
while if $g_n(x) \neq 0$ then $R^n - R^{n-1} < \Lth(x)$. 
But $R^{m+1} + R^{m-1} < R^n - R^{n-1}$ because $R \geq 2$ and $n-m \geq 2$.
\end{proof}

In particular,
$g_{2n}$ and $g_{2(n+1)}$ have disjoint support. Because
of this, we for the moment restrict to using these functions.
From Lemma~\ref{nilin} and $R\geq 2$ we obtain,
for any integer $N \geq 1$,
\[ \|\sum_{n \geq N}\l_{g_{2n} f}\|  \leq \sum_{n \geq N} R^{-2n} C_1 J_D(f)
= 2 C_1 R^{-2N} J_D(f) .  \]
Accordingly:
\begin{notation}
\label{psub}
Set $p_N = p_N^f = \sum_{n \geq N} g_{2n} f$.
\end{notation}
We then have:  

\begin{prop}
\label{double}
For any integer $N \geq 1$ 
\[ \|\l_{p_N}\| \leq 2C_1 R^{-2N} J_D(f)   .  \]
\end{prop}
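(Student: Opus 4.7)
The plan is essentially to unwind a computation that has already been displayed in the excerpt immediately before Notation~\ref{psub}, repackaging it in terms of $p_N$. There is no substantive obstacle: this is a bookkeeping corollary of Lemma~\ref{nilin} and geometric summation.

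First I would check that $p_N$ is a well-defined element of $c_c$. Since $f$ has finite support and $\Lth$ is proper, the product $g_{2n}f$ vanishes identically for all sufficiently large $n$, so the series defining $p_N$ is actually finite. Moreover, by Proposition~\ref{disjoint} the supports of $g_{2n}$ and $g_{2m}$ are disjoint whenever $n \neq m$ (since $|2n - 2m| \geq 2$), so the pointwise sum is unambiguous and $p_N \in c_c$.

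Next, using linearity of the map $f \mapsto \lambda_f$, I would write
\[
\lambda_{p_N} = \sum_{n \geq N} \lambda_{g_{2n} f},
\]
and invoke the triangle inequality for the operator norm together with Lemma~\ref{nilin} applied to each summand:
\[
\|\lambda_{p_N}\| \leq \sum_{n \geq N} \|\lambda_{g_{2n} f}\| \leq C_1 J_D(f) \sum_{n \geq N} R^{-2n}.
\]
The geometric sum evaluates to $R^{-2N}/(1-R^{-2})$, and since $R \geq 2$ one has $1/(1-R^{-2}) \leq 4/3 \leq 2$, yielding the claimed bound
\[
\|\lambda_{p_N}\| \leq 2 C_1 R^{-2N} J_D(f).
\]
All the substance lies in Lemma~\ref{nilin} (which in turn rests on Corollary~\ref{power} and the bounded doubling hypothesis through the bound $|\tilde B(n-1)|^{-1}|\tilde B(n+1)| \leq C_{\Lth}^{2K}$) and in Proposition~\ref{disjoint}; the present proposition simply records the geometric decay in $N$ that these facts produce.
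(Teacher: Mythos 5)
Your proof is correct and is essentially the paper's own argument: the proposition is obtained there by exactly the same triangle-inequality-plus-Lemma~\ref{nilin}-plus-geometric-sum computation, displayed immediately before Notation~\ref{psub}. (The appeal to Proposition~\ref{disjoint} is not actually needed here --- a finite sum of finitely supported functions is well defined regardless of support disjointness --- but it is harmless.)
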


Although Proposition~\ref{lip} gives some information about $L_D(g_nf)$,
we have not seen how to get a useful bound for $L_D(p_N)$. In contrast,
by using the support properties of the $g_n$'s we can obtain the following 
useful bound for $J_D(p_N)$, that is independent of $N$:

\begin{prop} \label{jip2}
For any positive integer $N$, \[ J_D(p_N^f) \le C_2 J_D(f) \]
where $C_2=4RC_1$.
\end{prop}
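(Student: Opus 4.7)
My plan is, for each fixed $r>0$, to bound $r\|(I-M_{2r})\lambda_{p_N^f}M_r\|$ uniformly in both $r$ and $N$ by $C_2 J_D(f)$, and then take the supremum over $r$. Since $f$ has finite support, only finitely many of the terms $g_{2n}f$ appearing in $p_N^f$ are nonzero, so the triangle inequality yields
\[
r\bigl\|(I-M_{2r})\lambda_{p_N^f}M_r\bigr\|\le\sum_{n\ge N}r\bigl\|(I-M_{2r})\lambda_{g_{2n}f}M_r\bigr\|.
\]
I will split this sum by calling an index $n$ \emph{small} when $R^{2n}(R+R^{-1})\le r$, and \emph{non-small} otherwise.

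For a small $n$, the support of $g_{2n}$ is contained in $B(R^{2n+1}+R^{2n-1})=B(R^{2n}(R+R^{-1}))$. Consequently, if $\xi$ is supported in $B(r)$, then each nonzero value of $\lambda_{g_{2n}f}\xi$ occurs at some point $x=zy$ with $z\in\mathrm{supp}(g_{2n}f)$ and $y\in B(r)$; the triangle inequality for $\Lth$ then forces $\Lth(x)\le R^{2n}(R+R^{-1})+r\le 2r$. Hence $(I-M_{2r})\lambda_{g_{2n}f}M_r=0$ for every small $n$, and these indices drop out of the sum entirely.

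For a non-small $n$, one has $R^{2n}>r/(R+R^{-1})$, and the trivial estimate $\|(I-M_{2r})\lambda_{g_{2n}f}M_r\|\le\|\lambda_{g_{2n}f}\|$ together with Lemma~\ref{nilin} gives $r\|(I-M_{2r})\lambda_{g_{2n}f}M_r\|\le rC_1R^{-2n}J_D(f)$. The non-small indices form a tail $\{n\ge n_*\}$ with $R^{-2n_*}<(R+R^{-1})/r$, so the resulting geometric series sums to
\[
\sum_{n\ge n_*}rC_1R^{-2n}J_D(f)=\frac{rC_1R^{-2n_*}}{1-R^{-2}}J_D(f)<\frac{R(R^2+1)}{R^2-1}\,C_1 J_D(f).
\]
A direct calculation shows $(R^2+1)/(R^2-1)\le 2$ for $R\ge 2$, so this is at most $2RC_1J_D(f)<C_2J_D(f)=4RC_1J_D(f)$, as required.

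The only non-routine step is the support argument that makes the small-index terms vanish exactly; without it, the number of contributing indices would grow logarithmically in $r$ and destroy the uniformity in $r$ that is essential for a bound on $J_D$. Once that vanishing is in hand, the remainder is a straightforward geometric series, and the factor $R$ built into $C_2=4RC_1$ leaves comfortable slack. It is perhaps worth noting that this argument uses only the operator-norm bound of Lemma~\ref{nilin}, and not the $J_D$ bound of Proposition~\ref{jip1}.
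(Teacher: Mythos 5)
Your proposal is correct and follows essentially the same route as the paper's proof: the paper likewise fixes $r$, defines the threshold index $N_r$ (your $n_*$) at which $R^{2n+1}+R^{2n-1}\le r$ fails, kills the terms below it by the same support argument, and sums the geometric tail using only Lemma~\ref{nilin}. The only differences are cosmetic bookkeeping in how the tail $\sum_{n\ge n_*}R^{-2n}$ is estimated against $r$, and both arguments land comfortably within the constant $C_2=4RC_1$.
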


\begin{proof}
Fix $N$, and let $r>0$ be given. Let $N_r$ be the
biggest $M$ such that for $n<M$ the 
annulus $A(R^{2n}-R^{2n-1}, R^{2n+1} + R^{2n-1})$ is contained in $B(r)$,
that is, such that $R^{2n+1} + R^{2n-1} \leq r$.
If $\xi \in c_c$ has its support in $B(r)$, then for any $n < N_r$ the support of
$\l_{g_{2n}f}\xi$ is contained in $B(2r)$, and so $(I-M_{2r})\l_{g_{2n}f}\xi = 0$.
Thus, for $n < N_r$ 
\[ (I-M_{2r})\l_{g_{2n}f}M_r = 0.  \]
Consequently, by Lemma~\ref{nilin} 
\begin{align*}
 \|(I-M_{2r})\l_{p_N}M_r\| & \leq \sum_{n \geq N_r} \|\l_{g_{2n}f}\|   \\
&\leq \sum_{n \geq N_r} C_1 R^{-2n} J_D(f)  =  2 C_1 R^{-2N_r} J_D(f)   .
\end{align*}
Now from the definition of $N_r$ we have
\[
r \leq R^{2N_r+1} + R^{2N_r-1} \leq 2R^{2N_r+1}
\]
because $R \geq 2$. Thus $R^{2N_r} \geq r/(2R)$. On using this
in the previous displayed equation, we obtain:
\[
 \|(I-M_{2r})\l_{p_N}M_r\| \leq 2(2R/r) C_1 J_D(f). 
\]
Since this is true for all $r>0$, the proof is complete.
\end{proof}

Now set $q_N = q_N^f = f - p_N$. Notice that $q_N(x) = 0$ 
when for some $n \geq N$ we have $g_{2n}(x) = 1$, 
which from Lemma~\ref{support} happens when
\[
R^{2n} + R^{2n-1} < \Lth(x) \leq R^{2n+1} - R^{2n-1}   .
\]
Thus $q_N$ is supported in the union of the annular regions $A_n = A(s_n,t_n)$, with 
\[
s_n=R^{2(n-1)+1} - R^{2(n-1)-1} \text{ and } t_n = R^{2n} + R^{2n-1}.
\]

We now arrange to apply Proposition \ref{keyprop} to
control $\lambda_{f\chi_{A(s_n,t_n)}}$.
We seek $r_n$ such that $3r_n < s_n=R^{2n-3}(R^2-1)$. 
To ensure that $q_N^f$ vanishes on $A(s_n-2r_n,s_n)$
it suffices to have $s_n-2r_n \geq R^{2(n-1)} + R^{2(n-1)-1}$, that is,
\[ 
2r_n < R^{2n-1}-R^{2n-2} - 2R^{2n-3} = R^{2n-3}(R^2 - R -2), 
\]
while its vanishing on $A(t_n,t_n+2r_n)$ is ensured if
$t_n+2r_n \leq R^{2n+1} - R^{2n-1}$, that is, if
\[
 2r_n <  R^{2n+1} - R^{2n} - 2R^{2n-1} = R^{2n-1} (R^2 - R - 2). 
 \]
Assuming henceforth that $R\ge 4$,
it is easily checked that $r_n = \tfrac16 R^{2n-1}$ satisfies all
three of these conditions. 

We can now apply Proposition~\ref{keyprop}.
With the values of $r_n,s_n,t_n$ chosen above,
\[ A_n = A(s_n,t_n)= A(R^{2(n-1)+1} - R^{2(n-1)-1}, R^{2n} + R^{2n-1}).     \]
Then by inequality \eqref{generaldoubling},
\[
 |B(r_n)|^{-1}|B(t_n + r_n)| \le \Clth^{1+\log_2((t_n+r_n)/r_n)} = \Clth^{1+\log_2(6R+7)}. 
  \]
The uniform (with respect to $n$) boundedness of these ratios is crucial to our analysis
and relies on the bounded doubling hypothesis.
This uniform boundedness, in combination with Proposition~\ref{keyprop}, gives
\begin{equation} \label{replacesplit}
\|\l_{(q_N} \chi_{A_n)}\| \leq 
C_3 R^{-2n} J_D(q_N)
\end{equation}
where $C_3$ depends only on $\Clth,R$.

From Proposition~\ref{jip2} we obtain
\[
 J_D(q_N^f) \leq J_D(f) + J_D(p_N^f) \leq (1 + C_2)J_D(f), 
 \]
which together with inequality \eqref{replacesplit} establishes
\begin{lem}
\label{qcut}
With notation as above, for each $n$ 
\[ 
\|\l_{(q_N} \chi_{A_n)}\| \leq C_4 R^{-2n}  J_D(f) 
 \]
where $C_4 = (1+C_2)C_3$.
\end{lem}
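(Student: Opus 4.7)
The plan is to simply combine two estimates that are already in hand. The first is inequality~\eqref{replacesplit}, namely
\[
\|\l_{q_N \chi_{A_n}}\| \leq C_3 R^{-2n} J_D(q_N),
\]
which was obtained by applying Proposition~\ref{keyprop} to $q_N$ on the annulus $A_n = A(s_n,t_n)$, after verifying (via the choice $r_n = \tfrac16 R^{2n-1}$) that $q_N$ vanishes on the two buffer annuli $A(s_n - 2r_n, s_n)$ and $A(t_n, t_n + 2r_n)$, and using bounded doubling to bound $|B(r_n)|^{-1}|B(t_n+r_n)|$ uniformly in $n$. The second ingredient is a bound on $J_D(q_N)$ in terms of $J_D(f)$.

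To obtain the latter, I write $q_N = f - p_N$ and use the fact that $J_D$ is a seminorm, so that by the triangle inequality
\[
J_D(q_N) \leq J_D(f) + J_D(p_N).
\]
Proposition~\ref{jip2} gives $J_D(p_N) \leq C_2 J_D(f)$, and hence $J_D(q_N) \leq (1 + C_2) J_D(f)$, with the bound independent of $N$.

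Substituting this into \eqref{replacesplit} yields
\[
\|\l_{q_N \chi_{A_n}}\| \leq C_3 R^{-2n} (1 + C_2) J_D(f) = C_4 R^{-2n} J_D(f)
\]
with $C_4 = (1+C_2) C_3$, which is the desired conclusion. There is no real obstacle at this step: the substantive work has already been done, namely the careful geometric construction of the $g_n$'s and the choice of $r_n$ that place the support of $q_N$ compatibly with the hypotheses of Proposition~\ref{keyprop}, along with the $N$-independent bound on $J_D(p_N)$ furnished by Proposition~\ref{jip2}. The role of the present lemma is simply to package these two estimates into a single bound convenient for later use.
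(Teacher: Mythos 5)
Your proposal is correct and follows exactly the paper's own argument: the paper likewise combines inequality~\eqref{replacesplit} with the triangle-inequality bound $J_D(q_N^f) \leq J_D(f) + J_D(p_N^f) \leq (1+C_2)J_D(f)$ furnished by Proposition~\ref{jip2}. There is nothing to add.
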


\begin{notation}
\label{esub}
Set $\rho_N = \rho_N^f  = \sum_{n \geq N} q_N^f\chi_{A_n}$.
\end{notation}
Notice that if $\Lth(x) > R^{2N} + R^{2N-1}$ 
then $\rho_N^f(x) = q_N^f(x)$, so that $f - (p_N + \rho_N)$
is supported in $B( R^{2N} + R^{2N-1})$.
Much as in the
proof of Proposition~\ref{double} we obtain from the last displayed inequality above:

\begin{prop}
\label{sum}
With notation as above,  for any integer $N \geq 2$, 
\[
\|\l_{\rho_N} \| \leq 2C_4 R^{-2N} J_D(f).
\]
\end{prop}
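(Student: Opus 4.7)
The plan is to imitate the proof of Proposition~\ref{double}: apply the triangle inequality for the operator norm to the sum defining $\rho_N$, use Lemma~\ref{qcut} to estimate each summand, and then sum a geometric series. Since $f$ has finite support and $g_{2n}f = 0$ for all sufficiently large $n$ (because $g_{2n}$ is supported outside an ever-growing ball), the same is true of $q_N^f = f - p_N^f$, so $\rho_N^f = \sum_{n\ge N} q_N^f\chi_{A_n}$ is in fact a \emph{finite} sum of elements of $c_c$, and there are no convergence issues to worry about before taking norms.

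First, I would write
\[
\lambda_{\rho_N} \ =\ \sum_{n\ge N} \lambda_{q_N^f\chi_{A_n}}
\]
and apply the operator-norm triangle inequality, obtaining
\[
\|\lambda_{\rho_N}\| \ \le\ \sum_{n\ge N} \|\lambda_{q_N^f\chi_{A_n}}\|.
\]
Next, I would invoke Lemma~\ref{qcut}, which gives $\|\lambda_{q_N^f\chi_{A_n}}\|\le C_4 R^{-2n} J_D(f)$ for each $n\ge N$, so that
\[
\|\lambda_{\rho_N}\| \ \le\ C_4 J_D(f)\sum_{n\ge N} R^{-2n}.
\]
Finally, since $R\ge 2$ we have $R^{-2}\le 1/4$, and the geometric series sums to $R^{-2N}/(1-R^{-2})\le 2R^{-2N}$. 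Combining these estimates yields $\|\lambda_{\rho_N}\|\le 2C_4 R^{-2N}J_D(f)$, which is exactly the desired bound.

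This is a short, essentially formal argument; there is no real obstacle, which is why the text says it proceeds ``much as in the proof of Proposition~\ref{double}.'' The only minor point to keep track of is that the $A_n$-dependence of the constant in Lemma~\ref{qcut} is of the form $C_4 R^{-2n}$ with $C_4$ \emph{independent of $n$}, and this is precisely what makes the geometric sum converge; that independence in turn rests on the uniform bound on $|B(r_n)|^{-1}|B(t_n+r_n)|$ established (via bounded doubling) in the discussion preceding Lemma~\ref{qcut}. The hypothesis $N\ge 2$ is not actually needed for the estimate itself, and could be replaced by $N\ge 1$; it appears presumably because $\rho_N$ will be used in subsequent arguments where $N\ge 2$ is convenient.
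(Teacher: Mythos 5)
Your argument is correct and is exactly the route the paper intends: the text derives Proposition~\ref{sum} from Lemma~\ref{qcut} ``much as in the proof of Proposition~\ref{double},'' i.e.\ by the triangle inequality and the geometric series $\sum_{n\ge N}R^{-2n}\le 2R^{-2N}$. Your side remarks (finiteness of the sum, $n$-independence of $C_4$ via bounded doubling) are accurate and consistent with the paper.
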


But we also need control of $J_D(\rho_N)$:

\begin{prop}
\label{econt}
With notation as above, for any integer $N \geq 2$ 
\[
J_D(\rho_N^f) \leq 4C_4 J_D(f).
\]
\end{prop}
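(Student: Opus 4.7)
The plan is to adapt the argument of Proposition~\ref{jip2} to the annular decomposition that defines $\rho_N^f$. Fix $r>0$; the goal is to establish the bound $r\,\|(I-M_{2r})\lambda_{\rho_N^f}M_r\|\le 4C_4\,J_D(f)$ and then take the supremum over $r$.

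The crucial support observation is that each summand $q_N^f\chi_{A_n}$ is supported in $A_n\subset B(t_n)$ with $t_n=R^{2n}+R^{2n-1}$. By the triangle inequality for $\Lth$, the operator $\lambda_{q_N^f\chi_{A_n}}$ carries functions supported in $B(r)$ into functions supported in $B(r+t_n)$. Thus, whenever $t_n\le r$, the composition $(I-M_{2r})\lambda_{q_N^f\chi_{A_n}}M_r$ vanishes identically. Let $N_r$ be the smallest integer for which $t_{N_r}>r$. Then only terms with $n\ge\max(N,N_r)$ contribute to $(I-M_{2r})\lambda_{\rho_N^f}M_r$.

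For each such $n$, Lemma~\ref{qcut} yields $\|\lambda_{q_N^f\chi_{A_n}}\|\le C_4R^{-2n}J_D(f)$, so summing the geometric series (whose factor $(1-R^{-2})^{-1}$ is at most $2$ since $R\ge 4$) produces
\[
\|(I-M_{2r})\lambda_{\rho_N^f}M_r\|\le 2C_4\,R^{-2\max(N,N_r)}\,J_D(f).
\]
To convert this into a bound of the form $\mathrm{const}/r$, I invoke the defining inequality $r<t_{N_r}\le 2R^{2N_r}$, which gives $rR^{-2N_r}<2$. Since $R^{-2\max(N,N_r)}\le R^{-2N_r}$, one finds $r\,R^{-2\max(N,N_r)}<2$, and therefore
\[
r\,\|(I-M_{2r})\lambda_{\rho_N^f}M_r\|<4C_4\,J_D(f).
\]
As the right-hand side is independent of $r$, taking the supremum over $r>0$ yields $J_D(\rho_N^f)\le 4C_4\,J_D(f)$.

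I expect no serious obstacle: the argument is essentially a direct translation of Proposition~\ref{jip2}, using the annular pieces $q_N^f\chi_{A_n}$ in place of the dyadic pieces $g_{2n}f$. The only care needed is to handle both cases $N_r\ge N$ and $N_r<N$ uniformly (which is what the $\max(N,N_r)$ accomplishes) and to verify that the triangle-inequality vanishing argument applies to each annular summand, which follows immediately from the inclusion $A_n\subset B(t_n)$.
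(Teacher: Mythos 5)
Your proof is correct and follows essentially the same route as the paper's own argument: identify the cutoff index $N_r$ below which $(I-M_{2r})\l_{q_N^f\chi_{A_n}}M_r$ vanishes by the support/triangle-inequality observation, sum the geometric series from Lemma~\ref{qcut}, and convert $R^{-2N_r}$ into $2/r$ via $r< t_{N_r}\le 2R^{2N_r}$. The only (harmless) refinement is your use of $\max(N,N_r)$, which the paper does not bother with.
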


\begin{proof}  
The proof is very similar to that of Proposition~\ref{jip2}, but we
give the details since the bookkeeping is somewhat different.
Fix $N$, and let $r>0$ be given. Let $N_r$ be the
biggest $M$ such that for $n<M$ the 
annulus $A_n$ is contained in $B(r)$,
that is, such that $R^{2n} + R^{2n-1} \leq r$.
If $\xi \in c_c$ has its support in $B(r)$, then for any $n < N_r$ the support of
$\l_{(q_N\chi_{A_n})}\xi$ is contained in $B(2r)$, and so $(I-M_{2r})\l_{(q_N\chi_{A_n})}\xi = 0$.
Thus, for $n < N_r$ we have
\[ (I-M_{2r})\l_{(q_N\chi_{A_n})}M_r = 0.  \]
Consequently, by Lemma~\ref{qcut} we have
\begin{align*}
 \|(I-M_{2r})\l_{(q_N\chi_{A_n})}M_r\| & \leq \sum_{n \geq N_r} \|\l_{(q_N\chi_{A_n})}\|   \\
&\leq \sum_{n \geq N_r}R^{-2n} C_4 J_D(f) = 2C_4 R^{-2N_r} J_D(f)   .
\end{align*}
Now from the definition of $N_r$ we have
\[
r \leq R^{2N_r} + R^{2N_r-1} \leq 2R^{2N_r}
\]
because $R \geq 4$. Thus $R^{2N_r} \geq r/2$. On using this
in the previous displayed equation, we obtain:
\[
 \|(I-M_{2r})\l_{(q_N\chi_{A_n})}M_r\| 
\le 4C_4 r^{-1} J_D(f).
\]
Since this is true for all $r>0$, this concludes the proof.
\end{proof}

Proposition~\ref{prosaic}, and its extension concerning arbitrary functions
for which $[D_\Lth,\lambda_f]$ is bounded, have now been established.

\medskip
We finally assemble the pieces to conclude the proof of our main theorem.
Let $\e > 0$ be given. We will show that the set
\[
B_J = \{\l_f: f \in c_c, \ f(e) = 0, \ \ \mathrm{and} \ \ J_D(f) \leq 1\}
\]
can be covered by a finite number of $\e$-balls for the operator norm.
Since $J_D \leq L_D$, this will imply the same result for $L_D$ 
in place of $J_D$ above, which verifies the criterion of Proposition
\ref{proplip}, and so proves the assertion of our main
theorem. Note that up to this point we have not shown that
$B_J$ is bounded for the operator norm.

Fix $R \geq 4$, and choose $N \geq 2$ such that 
\[
R^{-2N}\max(C_1,C_4)<\e/4.
\]
From Propositions~\ref{double}  and \ref{sum}
it now follows that if $f \in B_J$ then 
\[
\max\big(\|\l_{p_N^f}\|, \|\l_{\rho _N^f}\|\big) < \e/4
\]
so that
\[ \| \l_{p_N^f+\rho_N^f}\| < \e/2  .  \]
Thus
\[ \|\l_f - \l_{f - (p_N + \rho_N)}\| < \e/2   .  \]

We need next to know that the set of functions of the form $f - (p_N^f+\rho_N^f)$ with $f \in B_J$
is bounded for the operator norm. To do this we first show that it is
bounded for the norm $J_D$.
From Propositions~\ref{jip2} and \ref{econt} it follows 
that for any $f \in B_J$ we have
\[
J_D(f - (p_N^f+\rho_N^f)) \leq J_D(f) +  C_2 J_D(f)   + 4C_4 J_D(f) 
\leq 1+C_2+4C_4,
\]
giving the desired boundedness for $J_D$.

Now by construction $f - (p_N^f+\rho_N^f)$ is 
supported in $B( R^{2N} + R^{2N-1})$.
Let 
\[
V_J^N = \{f \in c_c: \ f(e) = 0, \ \ \mathrm{and}  \ f \ 
\mathrm{is \ supported \ in} \ B( R^{2N} + R^{2N-1}) \}  .
\]
Let 
\[ B_J^N = \{f \in V_J^N: J_D(f) \leq 1+ C_2+4C_4 \}   , \]
and notice that each $f - (p_N^f+\rho_N^f)$ is in $B_J^N$.
Both $J_D$ and the operator norm (via $\l$) restrict to norms on the vector space $V_J^N$,
and these norms are equivalent because $V_J^N$ is finite-dimensional.
Thus $B_J^N$ is bounded for the operator norm. Since we have shown
above that every $f \in B_J$ is in the operator-norm $\e/2$-neighborhood
of an element of $B_J^N$, it follows that $B_J$ is bounded for the
operator norm.

Since $V_J^N$ is finite-dimensional, 
$B_J^N$ can be covered by a finite number of operator-norm $\e/2$-balls. 
Consequently, since $B_J$ is contained in the operator-norm
$\e/2$-neighborhood of $B_J^N$, it follows that $B_J$ can be covered by a
finite number of operator-norm $\e$-balls. Thus $B_J$ is totally bounded for the
operator norm. This concludes the proof of Theorem~\ref{mainth}.
\qed

\section{On polynomial growth}
\label{examp}

Proposition~\ref{bnded} states that strong polynomial growth implies the 
bounded doubling property,
which implies polynomial growth, and that these are equivalent for finitely generated groups.

\begin{proof}[Proof of Proposition~\ref{bnded}]
Suppose that $\Lth$ has strong polynomial growth. Then, with
notation as in Definition \ref{polygr}, for any strictly
positive $r, s$ we get
\[
 |B(s)| \leq c^2 s^dr^{-d} |B(r)|   ,
 \]
which for $s = 2r$ gives the bounded doubling property.
Suppose instead that $\Lth$ has bounded doubling.
Then for any $s\ge 1$ we get
$|B(2^ks)| \leq \Clth^k |B(s)|$  
for each nonnegative integer $k$.
From this we find that if $1 \le s \le r$, then
\begin{equation}
\label{generaldoubling} 
|B(r)| \leq \Clth^{1+\log_2(r/s)} |B(s)|  
\end{equation}
where $\log_2$ denotes the base $2$ logarithm.
Indeed, let $k$ be the positive integer that satisfies $2^{k-1}s<r\le 2^ks$. 
Then $|B(r)|\le |B(2^ks)| \le \Clth^k|B(s)|$ and $k-1\le \log_2(r/s)$. On
setting $s = 1$ and rearranging we see that $\Lth$ has polynomial growth.  

Suppose now that $G$ is finitely generated
and that $\Lth$ is a length function on $G$. Then
for any word-length function $\tilde \Lth$ on $G$ there 
exists $C<\infty$ such that $\Lth\le C^{-1} \tilde \Lth$,
that is, the balls $\tilde B(r)$ associated to $\tilde \Lth$ satisfy $\tilde B(r)\subset B(Cr)$. 
Thus if $\Lth$ has polynomial growth, it follows that $\tilde \Lth$ does also.
 According to a theorem of Gromov \cite{Grm1, Klr,  Mnn, ShT}, 
this implies that $G$ is nilpotent-by-finite. 
But the property of strong polynomial growth holds for any word-length function on 
a finitely generated nilpotent-by-finite group \cite{Wlf, Bss, Mnn}. Thus
$\tilde\Lth$ has strong polynomial growth, and so there are
constants $\tilde C_{\tilde \Lth}$ and $\tilde d$ such that
\[
\tilde C_{\tilde \Lth}^{-1}r^{\tilde  d} \leq |\tilde B(r)| \leq |B(Cr)| 
\]
for all $r\ge 0$. This implies that $\Lth$ has strong polynomial growth.
\end{proof}

We conclude by exhibiting simple examples illustrating the inequivalence between these growth
properties, for groups that are not finitely generated.
Chapter~9 of \cite{Mnn} also contains an interesting discussion of infinitely generated groups
that are of locally polynomial growth.

\begin{exam}
The function $\Lth(x)=\ln(2|x|)$ for all $x\ne 0$
on the group $G=\integers$ is a length function that is not of polynomial growth.
\end{exam}

The remaining examples are based on infinite direct sums of finite groups.
Let $(G_n)_{n\in\naturals}$ be an arbitrary sequence of finite groups, with identity elements $e_n$.
Let $G$ be the direct sum of all these groups; $G$ consists of all sequences $x=(x_1,x_2,x_3,\dots)$
with $x_n\in G_n$ for all $n$ and $x_n = e_n$  for all but finitely many indices $n$.
Multiplication is defined componentwise.
Let $e = (e_1,e_2,\dots)$ be the identity element of $G$.
Let $1\le a_1<a_2<a_3<\dots$ be a strictly increasing sequence of positive real numbers
satisfying $\lim_{n\to\infty} a_n=\infty$.
Define $\Lth:G\to[0,\infty)$ by $\Lth(e)=0$
and $\Lth(x) = \max_{n: x_n\ne e_n} a_n$ for all $x\ne e$. 
Then $\Lth$ is a proper length function. 
Moreover, if $r=a_n$ then $|B(r)| = \prod_{m=1}^n |G_m|$.

\begin{exam}
Let $G_n= \integers/2\integers$, the group with $2$ elements.
Let $a_k = 2^{k^2}$. 
Then $|B(2^{K^2})| = 2^K$ for all $K\in\naturals$ and more  generally
$|B(r)| \le e^{C\sqrt{\ln(r)}}$ for all $r\ge 2$, for a certain constant $C<\infty$.
Thus the growth rate of $\Lth$ is slower than polynomial, and so $\Lth$
can not have strong polynomial growth. But if $r \geq 2$ and if the natural
number $p$ is such that $2^{p^2} \leq r < 2^{(p+1)^2}$ so that 
$|B(r)| = 2^p$, then $2r \leq 2^{(p+1)^2}$ so that $|B(2r)| \leq 2^{p+1}$.
Thus $|B(2r)| \leq 2|B(r)|$, so that $\Lth$ has bounded doubling.
\end{exam}

\begin{exam}
Now choose $(G_n)$ so that $|G_n|>1$ for all $n$ 
and $\lim_{n\to\infty} |G_n|=\infty$.
Choose $a_n = \prod_{m=1}^n |G_m|$.
The balls on the product group $G$ satisfy
$|B(a_n)| = \prod_{m=1}^n |G_m| = a_n$ for all $n$, 
and $|B(r)|<r$ for all other $r>1$, so $\Lth$ has polynomial growth. 
However, for $2\le r=a_n$, $\frac{|B(r)|}{|B(r/2)|} \ge \frac{|B(r)|}{|B(r-1)|} = |G_n|$ is not bounded above
uniformly in $n$, and so the doubling property does not hold.
\end{exam}

The next example shows that $\Lth$ can have polynomial growth, yet grow irregularly.
\begin{exam}
Let $G$ be as above. Choose any two parameters
$1< \gamma_1<\gamma_2<\infty$, 
and let $1=N_1<N_2<N_3<\dots$ be a sequence tending to infinity.
Set $a_1=1$ and for $N_k\le n<N_{k+1}$
choose $a_{k+1}/a_k=\gamma_1$ if $k$ is odd, and $=\gamma_2$ if $k$ is even.
Then $\Lth$ has polynomial growth.
However, $\Lth$ need not have strong polynomial growth.
Indeed, it is plainly possible to arrange, 
by choosing the sequence $(N_k)$ to increase to infinity sufficiently rapidly, that 
\[ \limsup_{r\to\infty} \frac{\log|B(r)|}{\log r} = \gamma_1^{-1}
\ \text{ while } \ 
\liminf_{r\to\infty} \frac{\log|B(r)|}{\log r} = \gamma_2^{-1}.\]
\end{exam}

\begin{exam}
Let $G^0$ be a finite non-commutative simple group,
and let $\gamma>1$.
Choose $G_n=G^0$ for all $n$, and $a_n = \gamma^n$.
Then $\Lth$ has polynomial growth, yet $G$ is not nilpotent-by-finite.
\end{exam}



\def\dbar{\leavevmode\hbox to 0pt{\hskip.2ex \accent"16\hss}d}

\end{document}